\DeclareMathOperator{\Hom}{Hom}
\DeclareMathOperator{\im}{im}
\DeclareMathOperator{\Ext}{Ext}
\let\oldwidehat\widehat
\protected\def\widehat{\oldwidehat}
\theoremstyle{definition}
\newtheorem{theorem}{Theorem}
\theoremstyle{definition}
\newtheorem{corollary}{Corollary}
\theoremstyle{definition}
\newtheorem{lemma}{Lemma}
\theoremstyle{definition}
\newtheorem{proposition}{Proposition}
\theoremstyle{definition}
\newtheorem{definition}{Definition}
\theoremstyle{definition}
\newtheorem{remark}{Remark}
\theoremstyle{definition}
\newtheorem{example}{Example}
\theoremstyle{definition}
\theoremstyle{definition}
\newtheorem{construction}{Construction}
\theoremstyle{definition}
\newtheorem*{problem*}{Problem}
\theoremstyle{definition}
\newtheorem*{theorem*}{Theorem}
\theoremstyle{definition}
\title{A $v_1$-banded vanishing line for the mod 2 Moore spectrum}
\author{Kevin Chang}
\date{}
\begin{document}

\maketitle

\begin{abstract}
The mod 2 Moore spectrum $C(2)$ is the cofiber of the self-map $2: \mathbb{S} \to \mathbb{S}$. Building on work of Burklund, Hahn, and Senger, we prove that above a line of slope $\frac{1}{5}$, the Adams spectral sequence for $C(2)$ collapses at its $E_5$-page and characterize the surviving classes. This completes the proof of a result of Mahowald, announced in 1970, but never proven.
\end{abstract}

\section{Introduction}
The \emph{sphere spectrum} $\mathbb{S}$ is perhaps the fundamental object of homotopy theory, and our understanding of its homotopy groups is a good measure of our understanding of homotopy theory. The \emph{Adams spectral sequence} is one of the primary tools used to compute these groups. Computing the Adams spectral sequence for $\mathbb{S}$ is an extremely difficult problem; in fact, it is only well-understood in a finite range.

The \emph{mod 2 Moore spectrum} $C(2)$ is defined as the cofiber of the map $2: \mathbb{S} \to \mathbb{S}$. Understanding its homotopy groups and Adams spectral sequence would be a significant step towards understanding these objects for $\mathbb{S}$.

Due to \cite{adams1966periodicity}, the Adams $E_2$-page for $C(2)$ does possess nice structure in an infinite range: above a line of slope $\frac{1}{5}$, there is a periodicity isomorphism $v_1^4: E_2^{s,t}(C(2)) \xrightarrow[]{\cong} E_2^{s + 4,t + 12}(C(2))$. We will refer to the Adams spectral sequence in this range as the \emph{$v_1$-periodic Adams spectral sequence}. Understanding the $v_1$-periodic Adams spectral sequence for $C(2)$ is the first in an infinite sequence of steps towards understanding the entire structure of its Adams spectral sequence.

In \cite{mahowald1970}, Mahowald announced a theorem bounding the differentials and describing the $E_{\infty}$-page in the $v_1$-periodic Adams spectral sequence for $C(2)$. However, he never published a proof. In \cite[Proposition 15.8]{burklund2019boundaries}, Burklund, Hahn, and Senger prove a weaker bound on the differentials and the $E_{\infty}$-page than Mahowald envisioned. In this paper, we complete the proof of Mahowald's result.

Our main result is the following:

\begin{theorem}[\protect{\cite[Theorem 5]{mahowald1970}}]
\label{mahowald_thm}
Let $E_r^{s,t}(C(2))$ denote the $\mathrm{H}\mathbb{F}_2$-Adams spectral sequence for $C(2)$. Then \begin{enumerate}[(1)]
\item We have $E_5^{s,t} \cong E_{\infty}^{s,t}$ when $s \ge \frac{1}{5}(t - s) + 5$ and $t - s \ge 25$.

\item For $t - s \ge 25$, the inclusion of Adams filtrations $$F^{\frac{1}{2}(t - s) - 1.5}\pi_{t - s}C(2) \xhookrightarrow{} F^{\frac{1}{5}(t - s) + 5}\pi_{t - s}C(2)$$ is an isomorphism.
\end{enumerate}
\end{theorem}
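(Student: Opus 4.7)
The plan is to sharpen the Burklund--Hahn--Senger bound \cite[Proposition 15.8]{burklund2019boundaries}, which establishes a version of part (1) with a larger collapse page or weaker slope, to the sharp statement $E_5 \cong E_\infty$ above slope $\frac{1}{5}$. The strategy is to show that in the $v_1$-periodic region the $E_5$-page is concentrated in a narrow band parallel to the slope-$\frac{1}{5}$ line, and then observe that $d_r$-differentials with $r \ge 5$ overshoot the band.

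First I would reduce to a finite problem using Adams' $v_1^4$-periodicity. Since $v_1^4$ acts as an isomorphism on $E_2$ above the line $s = \frac{1}{5}(t-s) + c$, every periodic class is of the form $v_1^{4k} x$ with $x$ in a fundamental domain, namely a strip of width $8$ in the $(t-s)$-direction intersected with the periodic region. Using that $v_1^4$ lifts to a $C(2)$-module self-map $\Sigma^{12} C(2) \to C(2)$ and is therefore a permanent cycle, the Leibniz rule propagates vanishing of $d_r$ across $v_1^{4k}$-multiplication, reducing part (1) to a finite check on generators in a single fundamental domain.

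The core of the argument is the ``$v_1$-banded vanishing line'' of the title: a vanishing statement for $E_5$ outside a narrow band of constant vertical width along the slope-$\frac{1}{5}$ line. Once such a band vanishing of $s$-width at most $5$ is established, one observes that for $r \ge 5$ the target of a $d_r$ on a class sitting on the lower boundary of the band lies a vertical distance $r + \frac{1}{5} \ge \frac{26}{5}$ above that boundary, hence above the upper boundary of the band, hence in the vanishing region; the differential therefore vanishes. The main obstacle will be to cut the band down to the required width on $E_5$: on $E_2$ the band is bounded above only by the classical Adams slope-$\frac{1}{2}$ vanishing line and is therefore too wide, and the sharpening requires identifying and proving a family of $d_2, d_3, d_4$-differentials that kill the high-$s$ classes. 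I expect the key inputs to be comparison with the $bo$-Adams spectral sequence for $C(2)$ via Mahowald's splitting of $bo \wedge C(2)$, the boundary identifications from BHS (which certify that certain $v_1^{4k}$-multiples are zero on $E_\infty$ and so cannot be differential targets), and Leibniz arguments using permanent cycles lifted from $\pi_* \mathbb{S}$ such as $\eta$, $\nu$, and $v_1^k$-families.

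Part (2) then follows from part (1) together with the classical Adams vanishing line for $C(2)$, which gives $E_2^{s,t}(C(2)) = 0$ for $s > \frac{1}{2}(t-s) - 1.5$ in the relevant range of $t-s$. Combined with part (1), one obtains $E_\infty^{s,t}(C(2)) = 0$ whenever $\frac{1}{5}(t-s) + c \le s < \frac{1}{2}(t-s) - 1.5$ and $t-s \ge v$. The successive quotients of the Adams filtration on $\pi_{t-s} C(2)$ in that range all vanish, so the inclusion $F^{\frac{1}{2}(t-s) - 1.5} \hookrightarrow F^{\frac{1}{5}(t-s) + c}$ is an equality, after possibly enlarging $c$ and $v$ so that the constants from parts (1) and (2) are simultaneously valid.
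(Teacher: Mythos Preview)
Your proposal diverges substantially from the paper's argument and contains two genuine errors.

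\textbf{Different route.} The paper does not work directly with the Adams spectral sequence for $C(2)$ at all. It passes to Pstr\k{a}gowski's synthetic category, proves a $v_1$-banded vanishing line first for $\widetilde{Y}=C(\widetilde{2})\otimes C(\widetilde{\eta})$ (quoting \cite[Theorem 14.1]{burklund2019boundaries}), then propagates this through the cofiber sequences
\[
\Sigma^{1,2}\widetilde{Y}\to\widetilde{Y}_2\to\widetilde{Y},\qquad
\Sigma^{2,4}\widetilde{Y}\to\widetilde{Y}_3\to\widetilde{Y}_2
\]
using \cite[Proposition 13.11]{burklund2019boundaries}. The sharpening over Burklund--Hahn--Senger comes from raising the band intercept $b$ for $\widetilde{Y}_2$ and $\widetilde{Y}_3$ by one unit; the delicate step is ruling out a single potential $d_3$ in the modified Adams spectral sequence for $\widetilde{Y}_2$, and this is done by a counting argument with $|\pi_{2k}L_{K(1)}(C(2)\otimes C(\eta^j))|$. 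Finally the splitting $\widetilde{Y}_3\simeq C(\widetilde{2})\oplus\Sigma^{4,6}C(\widetilde{2})$ transfers the result to $C(\widetilde{2})$. None of the $bo$-resolution, Leibniz, or fundamental-domain ingredients you sketch appear.

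\textbf{Geometric error.} The ``band'' in a $v_1$-banded vanishing line is bounded by lines of slope $\tfrac12$, not slope $\tfrac15$; the slope-$\tfrac15$ line is only the boundary of the region of validity. The $E_5$-page above that line is concentrated between $s=\tfrac12(t-s)-1.5$ and $s=\tfrac12(t-s)+1$, and your overshoot computation $r+\tfrac15$ is based on the wrong slope. With the correct slope-$\tfrac12$ band the overshoot argument does still bound differentials, but it presupposes that $E_5$ is already concentrated in that band, which is exactly what must be proved and is where all the work lies.

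\textbf{Error in deducing part (2).} Your claim that the classical Adams vanishing line gives $E_2^{s,t}(C(2))=0$ for $s>\tfrac12(t-s)-1.5$ is false: the $v_1$-periodic ``lightning flash'' classes live precisely in $\tfrac12(t-s)-1.5\le s\le \tfrac12(t-s)+1$ and are nonzero on $E_2$. Part (2) asserts that $E_\infty$ vanishes \emph{below} the line $s=\tfrac12(t-s)-1.5$ (and above slope $\tfrac15$), which is not a consequence of any $E_2$-vanishing line; it requires showing that every $E_2$-class in that wedge is killed by a $d_2$, $d_3$, or $d_4$. In the paper this is exactly the content of raising $b$ to $-1.5$, and it is proved simultaneously with part (1), not derived from it.
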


The improvement on \cite[Proposition 15.8]{burklund2019boundaries} comes from showing that the spectral sequence collapses at $E_5$ above a vanishing line rather than $E_6$, along with the improved constants 5 and 25 for the vanishing line. The collapse at $E_5$ is optimal, as there are known to be $d_4$-differentials in the region stated. We note that \cite{mahowald1975}, without proof, claims a slightly stronger result with the expression $\frac{1}{5}(t - s) + \frac{19}{5}$ instead of $\frac{1}{5}(t - s) + 5$.

The odd-primary analogue of \Cref{mahowald_thm} follows from \cite{miller1981relations} as in \cite[\S 14]{burklund2019boundaries}. In this case, the collapse occurs on the $E_3$-page.

\subsection{Organization of this paper}
In \Cref{back_ground}, we explain some prerequisite material on synthetic spectra. \Cref{syn_intro} is a brief introduction to Pstr\k{a}gowski's category of synthetic spectra. \Cref{band_intro} introduces $v_1$-banded vanishing lines, which will allow us to reformulate \Cref{mahowald_thm} more compactly.

In \Cref{main_section}, we state and prove a stronger version of \Cref{mahowald_thm}. We also include a computation of the homotopy of the $K(1)$-local Moore spectrum, which is highly relevant to our proof of \Cref{mahowald_thm}. This computation is not available anywhere in the literature, to the best of our knowledge. \Cref{main_section} is the meat of this paper.

\subsection{Notation}
All homology $H_*$ written without coefficients is assumed to be taken with coefficients in $\mathbb{F}_2$. All Adams spectral sequences will be $\mathrm{H}\mathbb{F}_2$-Adams spectral sequences unless stated otherwise. When we have a filtered abelian group $F^{\bullet}A$ and $r \in \mathbb{R}$, $F^rA$ means $F^{\lceil r\rceil}A$. For an $\mathrm{H}\mathbb{F}_2$-nilpotent complete spectrum $X$, $F^{\bullet}\pi_*X$ will denote the $\mathrm{H}\mathbb{F}_2$-Adams filtration on the homotopy of $X$.

\subsection{Acknowledgments}
I'd like to thank Robert Burklund for suggesting and mentoring this project. I'd also like to thank Slava Gerovitch, Ankur Moitra, and David Jerison for organizing MIT's Summer Program in Undergraduate Research (SPUR), where this research was conducted. Finally, I'd like to thank the referee for providing many helpful comments.

\section{Background}
\label{back_ground}
\subsection{Synthetic spectra}
\label{syn_intro}
In order to prove facts about ordinary spectra, we will pass to the more refined $\infty$-category $\mathrm{Syn}_{\mathrm{H}\mathbb{F}_2}$, whose objects are called synthetic spectra. Rather than explain what synthetic spectra are, we will describe some of their basic properties, as developed by Pstr\k{a}gowski in \cite{pstrgowski2018synthetic}, and then explain why they are useful to our work. Our treatment will be extremely brief, so for a more detailed introduction to synthetic spectra, see \cite[\S 9, Appendix A]{burklund2019boundaries}.

\begin{construction}[Pstr\k{a}gowski]
\label{syn_exists}
There is a symmetric monoidal stable $\infty$-category $\mathrm{Syn}_{\mathrm{H}\mathbb{F}_2}$ equipped with a symmetric monoidal functor $$\nu: \mathrm{Sp} \to \mathrm{Syn}_{\mathrm{H}\mathbb{F}_2}$$ that preserves filtered colimits. Objects of $\mathrm{Syn}_{\mathrm{H}\mathbb{F}_2}$ are called \emph{$\mathrm{H}\mathbb{F}_2$-synthetic spectra}, although we will refer to them simply as \emph{synthetic spectra}.
\end{construction}

\begin{definition}[\protect{\cite[Definition 4.6, Definition 4.9]{pstrgowski2018synthetic}}]
The \emph{bigraded sphere} $\mathbb{S}^{n,n}$ is defined to be $\nu\mathbb{S}^n$. Since $\mathrm{Syn}_{\mathrm{H}\mathbb{F}_2}$ is stable, we can define the bigraded sphere $\mathbb{S}^{a,b}$ to be $\Sigma^{a - b}\mathbb{S}^{b,b}$. For any synthetic spectrum $X$, the \emph{bigraded homotopy groups} $\pi_{a,b}(X)$ are defined to be the abelian groups $\Hom(\mathbb{S}^{a,b},X)$.
\end{definition}

\begin{definition}[\protect{\cite[Definition 4.27]{pstrgowski2018synthetic}}]
Since $\Sigma$ is defined as a pushout and $\nu$ is a pointed functor, there is a canonical natural transformation $\Sigma \circ \nu \to \nu \circ \Sigma$. Applying this natural transformation to the spectrum $\mathbb{S}^{-1}$, we get a natural comparison map $$\mathbb{S}^{0,-1} \simeq \Sigma(\nu\mathbb{S}^{-1}) \to \nu(\Sigma\mathbb{S}^{-1}) \simeq \mathbb{S}^{0,0}.$$ We denote this comparison map $\tau$. We denote the cofiber of $\tau$ by $C\tau$.

A synthetic spectrum is said to be \emph{$\tau$-invertible} if the map $$\tau: \Sigma^{0,-1}X \to X$$ is an equivalence.
\end{definition}

\begin{theorem}[Pstr\k{a}gowski]
\label{tau_inv}
\begin{enumerate}[(1)]
\item The localization functor $\tau^{-1}$ is symmetric monoidal.

\item The full subcategory of $\tau$-invertible synthetic spectra is equivalent to the category of spectra.

\item The composite $\tau^{-1} \circ \nu$ is equivalent to the identity.
\end{enumerate}
\end{theorem}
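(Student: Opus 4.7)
The plan is to treat the three parts in order, with part (1) following from general categorical principles and parts (2) and (3) requiring the concrete construction of $\mathrm{Syn}_{\mathrm{H}\mathbb{F}_2}$.

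For (1), the approach is to identify $\tau^{-1}$ as a \emph{smashing} localization. Since $\tau$ is a map between the invertible objects $\mathbb{S}^{0,-1}$ and $\mathbb{S}^{0,0}$, a synthetic spectrum $X$ is $\tau$-invertible iff $\tau \otimes X$ is an equivalence, so the $\tau$-invertibles form a $\otimes$-ideal (closed under tensoring with arbitrary objects). I would then compute $\tau^{-1} X \simeq \mathrm{colim}(X \xrightarrow{\tau} \Sigma^{0,1} X \xrightarrow{\tau} \Sigma^{0,2} X \to \cdots)$ and observe that this filtered colimit commutes with tensor products, so $\tau^{-1} X \simeq (\tau^{-1}\mathbb{S}^{0,0}) \otimes X$. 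A standard argument then shows that smashing localizations on presentable symmetric monoidal stable $\infty$-categories inherit a canonical symmetric monoidal structure.

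For (2), I would use Pstr\k{a}gowski's concrete model: $\mathrm{Syn}_{\mathrm{H}\mathbb{F}_2}$ is (equivalent to) a full subcategory of spectral presheaves on the category of finite $\mathrm{H}\mathbb{F}_2$-projective spectra, with $\nu$ sending $X$ to $Y \mapsto \mathrm{map}_{\mathrm{Sp}}(Y, X)$ and $\tau$ encoding the comparison between internal and external suspension. The plan is to produce an explicit inverse to the restriction of $\tau^{-1}$ to the $\tau$-invertibles, going via the functor $X \mapsto \pi_{*,0}(X)$ regarded as a spectrum, and to check on generators that these functors are mutually inverse. Equivalently, one shows that a $\tau$-invertible presheaf is forced to factor through evaluation on the sphere, recovering a spectrum. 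The main obstacle will be here: identifying $\tau$-invertible synthetic spectra as genuinely being spectra requires the site-theoretic description, and one must be careful with the hypercompletion step in Pstr\k{a}gowski's construction.

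For (3), once (2) is in hand, I would compute $\tau^{-1}\nu X$ directly. Using the colimit description, $\tau^{-1}\nu X \simeq \mathrm{colim}_k \Sigma^{0,k} \nu X$, and each $\Sigma^{0,k}\nu X$ can be rewritten using $\nu(\Sigma^n \mathbb{S}) \simeq \mathbb{S}^{n,n}$ and the fact that $\nu$ is symmetric monoidal. On the sheaf side, $\tau^{-1}\nu X$ becomes the constant presheaf with value $X$, which under the equivalence of (2) corresponds to $X$ itself. A natural way to package this is to first verify $\tau^{-1}\nu\mathbb{S}^n \simeq \mathbb{S}^n$ for each $n$, extend by stability and colimits (using that $\nu$ preserves filtered colimits by \Cref{syn_exists}), and then promote the equivalence of functors to the level claimed.
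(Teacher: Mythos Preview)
The paper does not prove this theorem at all: it is stated as a background result attributed to Pstr\k{a}gowski (with an implicit reference to \cite{pstrgowski2018synthetic}) and is immediately followed by a remark, with no proof environment. So there is no ``paper's own proof'' to compare against; the paper treats this as a black box from the literature.

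Your sketch is broadly in line with how the result is actually established in Pstr\k{a}gowski's work. Part (1) is essentially correct as stated: inverting a map between $\otimes$-invertible objects is a smashing localization, and smashing localizations on presentably symmetric monoidal stable $\infty$-categories are symmetric monoidal. For parts (2) and (3), your outline is in the right direction but somewhat loose. The actual argument in \cite{pstrgowski2018synthetic} does go through the sheaf model, but the key technical input is not quite ``$\tau$-invertible presheaves factor through evaluation on the sphere''; rather, one identifies the $\tau$-inverted category with (hypercomplete) sheaves for a finer topology in which every cofiber sequence of finite projectives is a cover, and then shows this sheaf category is equivalent to $\mathrm{Sp}$. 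Your alternative idea of producing an inverse via $X \mapsto \pi_{*,0}X$ ``regarded as a spectrum'' is not a functor to $\mathrm{Sp}$ as written (it lands in graded abelian groups), so that step would need to be replaced by something like evaluation at $\mathbb{S}$ on the sheaf side or mapping out of $\tau^{-1}\mathbb{S}^{0,0}$. Once (2) is in place, your strategy for (3) is fine.
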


\begin{remark}
Intuitively, \Cref{tau_inv} says that the only difference between ordinary spectra and synthetic spectra is the presence of $\tau$. In particular, for an ordinary spectrum $X$, the $\tau$-torsion in the bigraded homotopy groups of $\nu X$ contains information not captured by the homotopy groups of $X$. As we will see (\Cref{synth_ass}), this information is related to the differentials in the Adams spectral sequence for $X$.
\end{remark}

The next few properties begin to show the relationship between synthetic spectra and Adams spectral sequences.

\begin{lemma}[\protect{\cite[Lemma 4.23]{pstrgowski2018synthetic}}]
\label{cofib_synth}
Let $$A \to B \to C$$ be a cofiber sequence of ordinary spectra. Then $$\nu A \to \nu B \to \nu C$$ is a cofiber sequence of synthetic spectra if and only if $$0 \to H_*A \to H_*B \to H_*C \to 0$$ is a short exact sequence.
\end{lemma}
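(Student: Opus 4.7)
My plan is to reduce both directions to the mod-$\tau$ picture and invoke Pstr\k{a}gowski's symmetric monoidal equivalence $\mathrm{Syn}_{\mathrm{H}\mathbb{F}_2}/\tau \simeq \mathcal{D}(\mathrm{Comod}_{A_*})$, under which $\nu X/\tau$ corresponds to $H_*X$ placed in chain-complex degree $0$. The second standard input I will need is that $\nu X$ is always $\tau$-adically complete, so finite colimits of $\nu$-objects remain $\tau$-complete and a map between such is an equivalence iff it is so modulo $\tau$.

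For the implication $(\Leftarrow)$, I would form $Z := \mathrm{cofib}(\nu A \to \nu B)$ in $\mathrm{Syn}_{\mathrm{H}\mathbb{F}_2}$ with the canonical comparison $f: Z \to \nu C$. By \Cref{tau_inv}(1),(3), $\tau^{-1} f$ is the equivalence $\mathrm{cofib}(A \to B) \xrightarrow{\sim} C$. Since $Z$ is $\tau$-complete, $f$ is an equivalence iff $f/\tau$ is. Under the mod-$\tau$ equivalence, the sequence $\nu A/\tau \to \nu B/\tau \to \nu C/\tau$ corresponds to $H_*A \to H_*B \to H_*C$ in chain-complex degree $0$, and the hypothesized short exact sequence of comodules realizes these as an exact triangle in $\mathcal{D}(\mathrm{Comod}_{A_*})$; hence $f/\tau$ is an equivalence, and we are done.

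For $(\Rightarrow)$, applying $-/\tau$ (a colimit, hence preserving cofiber sequences) yields a cofiber sequence $\nu A/\tau \to \nu B/\tau \to \nu C/\tau$. Via the equivalence with $\mathcal{D}(\mathrm{Comod}_{A_*})$, this becomes an exact triangle whose three vertices $H_*A$, $H_*B$, $H_*C$ all lie in chain-complex degree $0$. Any exact triangle in a derived category whose vertices are concentrated in a single common degree arises from a short exact sequence of the underlying objects, so we recover $0 \to H_*A \to H_*B \to H_*C \to 0$.

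The main obstacle, on which the whole argument pivots, is the nontrivial equivalence $\mathrm{Syn}_{\mathrm{H}\mathbb{F}_2}/\tau \simeq \mathcal{D}(\mathrm{Comod}_{A_*})$ together with the explicit identification $\nu X/\tau \leftrightarrow H_*X$, and the $\tau$-completeness of $\nu X$. These are structural facts from Pstr\k{a}gowski's theory rather than things I would prove afresh; once granted, the lemma is essentially the elementary observation that exact triangles of single-concentrated-degree objects in a derived category encode short exact sequences.
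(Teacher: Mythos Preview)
The paper does not supply its own proof of this lemma; it is quoted verbatim as \cite[Lemma~4.23]{pstrgowski2018synthetic} with no argument given. So there is nothing in the paper to compare against, and I can only assess your proposal on its merits.

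Your $(\Rightarrow)$ direction is correct: tensoring with $C\tau$ preserves cofiber sequences, and under the equivalence with $\mathcal{D}(\mathrm{Comod}_{A_*})$ an exact triangle whose three terms are concentrated in homological degree~$0$ is precisely a short exact sequence of comodules.

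Your $(\Leftarrow)$ direction contains a genuine gap. You assert that ``$\nu X$ is always $\tau$-adically complete,'' and use this to conclude that $f\colon Z\to\nu C$ is an equivalence once $f/\tau$ is. But this is false in general: the paper itself records in \Cref{2_tau_comp} that $\nu X$ is $\tau$-complete \emph{if and only if} $X$ is $\mathrm{H}\mathbb{F}_2$-nilpotent complete. For an arbitrary cofiber sequence $A\to B\to C$ there is no reason for $A$, $B$, $C$ to be nilpotent complete, so neither $Z$ nor $\nu C$ need be $\tau$-complete, and the implication ``$f/\tau$ equivalence $\Rightarrow$ $f$ equivalence'' does not follow from completeness alone. (Even if $Z$ were $\tau$-complete, you would also need $\nu C$ to be.)

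The repair is easy and uses both of the facts you already established. Let $W=\operatorname{cofib}(f)$. From your computation that $f/\tau$ is an equivalence, $W\otimes C\tau\simeq 0$, so the map $\tau\colon\Sigma^{0,-1}W\to W$ is an equivalence and $W$ is $\tau$-invertible. From your observation that $\tau^{-1}f$ is an equivalence (via \Cref{tau_inv}), $\tau^{-1}W\simeq 0$. A $\tau$-invertible object is its own $\tau$-localization, so $W\simeq 0$ and $f$ is an equivalence. With this modification your strategy goes through cleanly, without any completeness hypotheses; this is also closer in spirit to Pstr\k{a}gowski's original argument, which is carried out directly at the level of the sheaf definition of $\nu$ rather than via $\tau$-completion.
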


\begin{remark}
A short exact sequence $$0 \to H_*A \to H_*B \to H_*C \to 0$$ induces a long exact sequence of Adams $E_2$-pages: \begin{align*}
\cdots &\to \Ext_{A_*}^{s,t}(\mathbb{F}_2,H_*A) \to \Ext_{A_*}^{s,t}(\mathbb{F}_2,H_*B) \to \Ext_{A_*}^{s,t}(\mathbb{F}_2,H_*C) \\
&\quad \to \Ext_{A_*}^{s + 1,t}(\mathbb{F}_2,H_*A) \to \Ext_{A_*}^{s + 1,t}(\mathbb{F}_2,H_*B) \to \Ext_{A_*}^{s + 1,t}(\mathbb{F}_2,H_*C) \to \cdots.
\end{align*}
As we will see in \Cref{homo_ext}, this is the long exact sequence in homotopy associated to the cofiber sequence $$C\tau \otimes A \to C\tau \otimes B \to C\tau \otimes C.$$
\end{remark}

\begin{lemma}[\protect{\cite[Lemma 9.15]{burklund2019boundaries}}]
\label{filt_lift}
If a map $f: X \to Y$ of ordinary spectra has Adams filtration at least $k$, then there exists a factorization
\begin{center}
\begin{tikzcd}
& \Sigma^{0,-k}\nu Y \arrow[d, "\tau^k"] \\
\nu X \arrow[ur, dotted, "\widetilde{f}"] \arrow[r, "\nu f"] & \nu Y
\end{tikzcd}
\end{center}
\end{lemma}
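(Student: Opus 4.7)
The plan is to proceed by induction on $k$. For $k = 0$ take $\widetilde f = \nu f$.

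For the inductive step, let $f : X \to Y$ have Adams filtration at least $k+1$. By definition of Adams filtration, $f$ admits a factorization $f = g \circ h$ with $h : X \to Z$ of Adams filtration at least $k$ and $g : Z \to Y$ inducing zero on mod-$2$ homology. Applying the inductive hypothesis to $h$ yields $\widetilde h : \nu X \to \Sigma^{0,-k}\nu Z$ with $\tau^k \circ \widetilde h = \nu h$. Granting the base case $k = 1$, which supplies $\widetilde g : \nu Z \to \Sigma^{0,-1}\nu Y$ with $\tau \circ \widetilde g = \nu g$, I would set $\widetilde f := \Sigma^{0,-k}\widetilde g \circ \widetilde h$. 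Naturality of $\tau^k$ with respect to $\nu g$ gives $\tau^k \circ \Sigma^{0,-k}(\nu g) = \nu g \circ \tau^k$, from which a brief unwinding yields $\tau^{k+1}\circ\widetilde f = \nu g \circ \nu h = \nu f$.

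The heart of the argument is therefore the base case. Let $C = \mathrm{cofib}(g)$. Because $H_*g = 0$, the long exact sequence in mod-$2$ homology breaks off into the short exact sequence $0 \to H_*Y \to H_*C \to H_*\Sigma Z \to 0$ associated to the rotated cofiber sequence $Y \to C \to \Sigma Z$. By \Cref{cofib_synth}, applying $\nu$ produces a cofiber sequence $\nu Y \to \nu C \to \Sigma^{1,1}\nu Z$ in $\mathrm{Syn}_{\mathrm{H}\mathbb{F}_2}$, and extending one step to the left yields a boundary map $\partial : \Sigma^{0,1}\nu Z \to \nu Y$. Reinterpretation of bidegree supplies the candidate $\widetilde g : \nu Z \to \Sigma^{0,-1}\nu Y$.

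The main obstacle is verifying $\tau \circ \widetilde g = \nu g$ on the nose. Inverting $\tau$ shows that both sides recover $g$, but only modulo $\tau$-power torsion in $[\nu Z,\nu Y]$. To close the gap I would extract a second, canonical factorization of $\nu g$ through the fiber $\Sigma^{0,1}\nu Z$ of $\nu Y \to \nu C$ using the nullhomotopy obtained by applying $\nu$ to the defining nullhomotopy of the composite $Z \to Y \to C$, and then show that this canonical factorization coincides with $\partial \circ \tau_{\nu Z}$ via a naturality argument using the symmetric monoidal structure on $\nu$. The identity $\tau \circ \widetilde g = \nu g$ then follows by reinterpreting bidegree on both sides.
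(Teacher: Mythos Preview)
The paper does not prove this lemma; it is cited from \cite[Lemma~9.15]{burklund2019boundaries}. The closest thing to a proof in the paper is \Cref{tau_div}, which carries out precisely your $k=1$ construction---take the cofiber $C$ of $g$, observe that the rotated sequence $Y\to C\to\Sigma Z$ is short exact on homology, apply \Cref{cofib_synth}, and extend to the left to produce $\widetilde g$---and then explicitly defers the identity $\tau\widetilde g = \nu g$ to the proof in the cited source. So your proposal agrees with what little the paper actually does, and goes further by (a) writing out the induction on $k$, which is correct as you have it, and (b) attempting the verification the paper omits.

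On (b): your outline is the right shape, but the final move is under-specified. What makes the factoring map $\nu Z \to \Sigma^{0,1}\nu Z$ equal to $\tau$ is precisely the \emph{definition} of $\tau$ given in this paper as the canonical comparison $\Sigma\circ\nu \Rightarrow \nu\circ\Sigma$. Evaluated at $Z$, this comparison is a map $\Sigma^{1,0}\nu Z \to \nu(\Sigma Z)=\Sigma^{1,1}\nu Z$, and monoidality of $\nu$ identifies it with $\tau\otimes\id_{\nu Z}$. Comparing the synthetic cofiber sequence of $\nu g$ with $\nu$ applied to the cofiber sequence of $g$ (using $\nu$ of the nullhomotopy, exactly as you propose) produces a commuting square whose right-hand vertical map is this comparison and whose left-hand vertical map is $\id_{\nu Y}$; reading off the boundary maps then gives $\partial\circ\tau=\nu g$. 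So your ``naturality argument using the symmetric monoidal structure'' can be made to work, but the essential input is recognizing that the relevant natural transformation \emph{is} $\tau$ by definition, rather than deducing it from monoidality alone.
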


\begin{remark}
\label{tau_div}
Although the lift $\widetilde{f}$ in \Cref{filt_lift} is not canonical in general, it is possible to pick a canonical lift for $k = 1$. Suppose $f: X \to Y$ is a map of ordinary spectra with Adams filtration at least 1; in other words, suppose it induces the 0 map on homology. If we take cofibers $$Y \to Z \to \Sigma X,$$ then it follows that $$0 \to H_*Y \to H_*Z \to H_*(\Sigma X) \to 0$$ is a short exact sequence. Hence, \Cref{cofib_synth} implies that applying $\nu$ to this sequence gives us a cofiber sequence $$\nu Y \to \nu Z \to \Sigma^{1,1}\nu X.$$ Extending this to the right and suspending gives us a map $$\widetilde{f}: \Sigma^{0,1}\nu X \to \nu Y.$$ It is shown in the proof of \cite[Lemma 9.15]{burklund2019boundaries} that $\tau\widetilde{f} = f$, so $\widetilde{f}$ is a canonical $\tau$-division of $\nu f$.
\end{remark}

\begin{example}
\label{C2_examp}
Consider the map $2: \mathbb{S}^0 \to \mathbb{S}^0$, and denote its cofiber by $C(2)$. This is the \emph{mod 2 Moore spectrum}, the main object of study in this paper. Shifting the cofiber sequence $\mathbb{S}^0 \xrightarrow[]{2} \mathbb{S}^0 \to C(2)$ to the right, we have a cofiber sequence $$\mathbb{S}^0 \to C(2) \to \mathbb{S}^1.$$ This cofiber sequence induces a short exact sequence in homology: $$0 \to H_*\mathbb{S}^0 \to H_*C(2) \to H_*\mathbb{S}^1 \to 0.$$ Thus, \Cref{cofib_synth} implies that we get a cofiber sequence of synthetic spectra $$\mathbb{S}^{0,0} \to \nu C(2) \to \mathbb{S}^{1,1}.$$ It can be shown (see \cite[Lemma 15.4]{burklund2019boundaries}) that the going-around map to the left of this cofiber sequence is a map $\widetilde{2}: \mathbb{S}^{0,1} \to \mathbb{S}^{0,0}$ such that $\nu(2) = \tau\widetilde{2}$. In particular, it turns out that $\widetilde{2}$ is the $\tau$-division provided by \Cref{tau_div}. Hence, $\nu C(2) \simeq C(\widetilde{2})$.

The existence of such a map $\widetilde{2}$ agrees with \Cref{filt_lift}, since 2 induces the 0 map on homology.
\end{example}

\begin{example}
Consider the Hopf map $\eta: \mathbb{S}^1 \to \mathbb{S}^0$. Note that $\eta$ induces the 0 map on homology. With similar reasoning to \Cref{C2_examp}, it is possible to show that $\nu C(\eta) \simeq C(\widetilde{\eta})$ for a map $\widetilde{\eta}: \mathbb{S}^{1,2} \to \mathbb{S}^{0,0}$ such that $\nu(\eta) = \tau\widetilde{\eta}$.

Since $\nu$ is symmetrical monoidal (\Cref{syn_exists}), we have an equivalence $\nu(C(2) \otimes C(\eta)) \simeq C(\widetilde{2}) \otimes C(\widetilde{\eta})$. We will denote these objects $Y \coloneqq C(2) \otimes C(\eta)$ and $\widetilde{Y} \coloneqq C(\widetilde{2}) \otimes C(\widetilde{\eta})$.
\end{example}

Despite the abstractness of the properties above, synthetic spectra are useful in a very concrete way to our work. In particular, synthetic spectra provide a clarifying perspective on Adams spectral sequences. This connection is summarized in \Cref{homo_ext}, \Cref{synth_ass}, and \Cref{synth_filt}.

\begin{lemma}[\protect{\cite[Lemma 4.56]{pstrgowski2018synthetic}}]
\label{homo_ext}
For any ordinary spectrum $X$, there is a natural isomorphism $$\pi_{t - s,t}(C\tau \otimes \nu X) \cong \Ext_{A_*}^{s,t}(\mathbb{F}_2,H_*X).$$ In other words, $\pi_{*,*}(C \tau \otimes \nu X)$ is precisely the Adams $E_2$-page of $X$.
\end{lemma}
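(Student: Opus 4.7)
The plan is to construct a cosimplicial resolution of $\nu X$ in synthetic spectra that, after smashing with $C\tau$, realizes the cobar complex computing $\Ext_{A_*}(\mathbb{F}_2, H_*X)$. First I would consider the $\mathrm{H}\mathbb{F}_2$-Amitsur cosimplicial resolution
$$X^\bullet \coloneqq \mathrm{H}\mathbb{F}_2^{\otimes(\bullet+1)} \otimes X$$
of ordinary spectra, whose totalization is the $\mathrm{H}\mathbb{F}_2$-nilpotent completion of $X$ and whose Bousfield--Kan spectral sequence is the Adams spectral sequence with $E_2^{s,t} = \Ext_{A_*}^{s,t}(\mathbb{F}_2, H_*X)$. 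At each level, $H_*(X^n) \cong A_*^{\otimes n} \otimes H_*X$ is a cofree extended comodule, so by repeatedly applying \Cref{cofib_synth} to the cofiber sequences built from the cosimplicial structure, $\nu X^\bullet$ assembles into a cosimplicial synthetic spectrum whose (appropriately completed) totalization is $\nu X$.

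Next I would pin down the key local computation: $\pi_{*,*}(C\tau \otimes \nu \mathrm{H}\mathbb{F}_2)$. Since the $\mathrm{H}\mathbb{F}_2$-Adams spectral sequence for $\mathrm{H}\mathbb{F}_2$ collapses on the line $s=0$ with $E_\infty^{0,t} = (A_*)_t$, one expects and verifies $\pi_{*,*}(C\tau \otimes \nu \mathrm{H}\mathbb{F}_2) \cong A_*$ concentrated in bidegrees $(t,t)$ (i.e.\ at $s=0$). Using that $\nu$ is symmetric monoidal, the same analysis for $\mathrm{H}\mathbb{F}_2 \otimes Y$ gives $\pi_{t,t}(C\tau \otimes \nu(\mathrm{H}\mathbb{F}_2 \otimes Y)) \cong (A_* \otimes H_*Y)_t$, concentrated at $s=0$. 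Applied levelwise to the resolution, this identifies $\pi_{t,t}(C\tau \otimes \nu X^n) \cong (A_*^{\otimes n} \otimes H_*X)_t$ with the $n$-th term of the cobar complex, and a direct check shows the cosimplicial cofaces descend to the cobar differentials.

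Finally, the spectral sequence of the totalized cosimplicial object in $C\tau$-modules has $E_1$-page concentrated on the $s=0$ line of the internal synthetic bigrading with $d_1$ the cobar differential, so its $E_2$-page is $\Ext_{A_*}^{s,t}(\mathbb{F}_2, H_*X)$, supported on a single anti-diagonal of the bigraded homotopy and hence collapsing. This yields $\pi_{t-s,t}(C\tau \otimes \nu X) \cong \Ext_{A_*}^{s,t}(\mathbb{F}_2, H_*X)$, natural in $X$. The hard part will be convergence: justifying that $\mathrm{Tot}$ commutes with both $\nu$ and $C\tau \otimes -$, and that the synthetic totalization of $\nu X^\bullet$ really agrees with $\nu X$, requires a careful completeness argument in $\mathrm{Syn}_{\mathrm{H}\mathbb{F}_2}$ analogous to Bousfield's convergence theorem for the classical Adams spectral sequence.
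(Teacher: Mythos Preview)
The paper does not give its own proof of this lemma; it is simply quoted from \cite[Lemma 4.56]{pstrgowski2018synthetic} as background material, so there is nothing in this paper to compare your argument against.

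That said, a brief comment on your proposal versus Pstr\k{a}gowski's actual argument. Pstr\k{a}gowski does not run a cosimplicial totalization argument. Instead he equips $\mathrm{Syn}_{\mathrm{H}\mathbb{F}_2}$ with a $t$-structure whose heart is equivalent to the abelian category of $A_*$-comodules, shows that $C\tau \otimes \nu X$ is (up to shifts) an object of the heart corresponding to the comodule $H_*X$, and then identifies bigraded homotopy with $\Ext$ in the heart. This is essentially a structural identification rather than a spectral-sequence computation.

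Your cosimplicial approach is morally correct but, as you yourself flag, the convergence step is genuinely delicate. The functor $\nu$ preserves filtered colimits, not limits, so $\mathrm{Tot}(\nu X^\bullet)$ is not $\nu X$ on the nose; rather it is the $\tau$-completion of $\nu X$ (compare \Cref{2_tau_comp}). You would therefore need either to restrict to $\mathrm{H}\mathbb{F}_2$-nilpotent complete $X$, or to argue separately that smashing with $C\tau$ kills the difference between $\nu X$ and its $\tau$-completion. Both are true, but neither is free, and handling them cleanly amounts to redoing a chunk of Pstr\k{a}gowski's foundational work. If you want a self-contained proof, the $t$-structure route is shorter and avoids these convergence subtleties entirely.
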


\begin{lemma}
\label{tau_cofib}
For each nonnegative integer $i$, there is a natural cofiber sequence $$\Sigma^{0,-i}C\tau \to C\tau^{i + 1} \to C\tau^i.$$
\end{lemma}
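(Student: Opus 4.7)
The plan is to realize this cofiber sequence as an instance of the octahedral axiom applied to the factorization of $\tau^{i+1}$ as the composable pair
$$\mathbb{S}^{0,-i-1} \xrightarrow{\tau} \mathbb{S}^{0,-i} \xrightarrow{\tau^i} \mathbb{S}^{0,0},$$
where the first arrow is the shift $\Sigma^{0,-i}\tau$ of the map $\tau: \mathbb{S}^{0,-1} \to \mathbb{S}^{0,0}$ introduced in the definition of $C\tau$.

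Concretely, I would form the diagram
\begin{center}
\begin{tikzcd}
\mathbb{S}^{0,-i-1} \arrow[r, "\tau"] \arrow[d] & \mathbb{S}^{0,-i} \arrow[r, "\tau^i"] \arrow[d] & \mathbb{S}^{0,0} \arrow[d] \\
0 \arrow[r] & \Sigma^{0,-i}C\tau \arrow[r] & C\tau^{i+1}
\end{tikzcd}
\end{center}
in which the left-hand square is the pushout defining the cofiber of the shifted $\tau$ (its identification with $\Sigma^{0,-i}C\tau$ is immediate, since applying $\mathbb{S}^{0,-i}\otimes -$ preserves cofiber sequences) and the outer rectangle is the pushout defining $C\tau^{i+1}$. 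The pasting lemma for pushout squares in the stable $\infty$-category $\mathrm{Syn}_{\mathrm{H}\mathbb{F}_2}$ then forces the right-hand square to be a pushout as well. Reading off the horizontal cofibers of that square, I obtain
$$\mathrm{cof}\bigl(\Sigma^{0,-i}C\tau \to C\tau^{i+1}\bigr) \simeq \mathrm{cof}\bigl(\tau^i: \mathbb{S}^{0,-i} \to \mathbb{S}^{0,0}\bigr) = C\tau^i,$$
which is precisely the asserted cofiber sequence.

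Naturality in $i$ is automatic from the construction, as the entire diagram is functorial in $i$ (the map $\tau$ itself is canonical). The main obstacle here is essentially nonexistent: the whole argument is a routine application of the octahedral axiom once one recognizes the factorization $\tau^{i+1} = \tau^i \circ \tau$, so the only content to verify is the bookkeeping of bigraded shifts.
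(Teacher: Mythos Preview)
Your proof is correct and is essentially the same as the paper's: both apply the octahedral axiom to the factorization $\tau^{i+1} = \tau^i \circ (\Sigma^{0,-i}\tau)$. The only cosmetic difference is that the paper packages this as a $3\times 3$ diagram whose rows and first two columns are cofiber sequences (forcing the third column $C\tau^{i+1}\to C\tau^i\to \Sigma^{1,-i}C\tau$ to be one, then rotating), while you phrase it via the pasting lemma for pushout squares; these are equivalent formulations of the same fact in a stable $\infty$-category.
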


\begin{proof}
We have a commutative diagram
\begin{center}
\begin{tikzcd}
\mathbb{S}^{0,-i - 1} \arrow[r, "\tau^{i + 1}"] \arrow[d, "\tau"] & \mathbb{S}^{0,0} \arrow[r] \arrow[d, "\simeq"] & C\tau^{i + 1} \arrow[d] \\
\mathbb{S}^{0,-i} \arrow[r, "\tau^i"] \arrow[d] & \mathbb{S}^{0,0} \arrow[r] \arrow[d] & C\tau^i \arrow[d] \\
\Sigma^{0,-i}C\tau \arrow[r] & 0 \arrow[r] & \Sigma^{1,-i}C\tau
\end{tikzcd}
\end{center}

Here, all 3 rows are cofiber sequences, and the first 2 columns are cofiber sequences. Hence, the 3rd column $$C\tau^{i + 1} \to C\tau^i \to \Sigma^{1,-i}C\tau$$ is a cofiber sequence as well. We get our desired cofiber sequence by extending this cofiber sequence to the left: $$\Sigma^{0,-i}C\tau \to C\tau^{i + 1} \to C\tau^i.$$
\end{proof}

\begin{definition}
We call the going-around map $C\tau^i \to \Sigma^{1,-i}C\tau$ from \Cref{tau_cofib} the \emph{$\tau$-Bockstein}.
\end{definition}

\begin{theorem}[\protect{\cite[Theorem 9.19]{burklund2019boundaries}}]
\label{synth_ass}
Let $X$ be an $\mathrm{H}\mathbb{F}_2$-nilpotent complete spectrum with strongly convergent Adams spectral sequence. Let $x$ denote a class in the $E_2^{s,t}$-term of the Adams spectral sequence for $X$. Then the following are equivalent: \begin{enumerate}[(1)]
\item The differentials $d_2,\ldots,d_r$ vanish on $x$.

\item $x$, as an element of $\pi_{t - s,t}(C\tau \otimes \nu X)$, lifts to $\pi_{t - s,t}(C\tau^r \otimes \nu X)$.

\item $x$ admits a lift to $\pi_{t - s,t}(C\tau^r \otimes \nu X)$ whose image under the $\tau$-Bockstein $C\tau^r \otimes \nu X \to \Sigma^{1,-r}C\tau \otimes \nu X$ is $-d_{r + 1}(x)$.
\end{enumerate}

If $x$ is a permanent cycle, then there exists a lift along the map $$\pi_{t - s,t}(\nu X) \to \pi_{t - s,t}(C\tau \otimes \nu X).$$ For such a lift $\widetilde{x}$, the following statements hold: \begin{enumerate}[(1)]
\item If $x$ survives to the $E_{r + 1}$-page, then $\tau^{r - 1}\widetilde{x} \ne 0$.

\item If $x$ survives to the $E_{\infty}$-page, then the image of $\widetilde{x}$ in $\pi_{t - s}X$ is of Adams filtration $s$ and is detected by $x$ in the Adams spectral sequence for $X$.
\end{enumerate}

Furthermore, there always exists a choice of lift $\widetilde{x}$ satisfying the following: \begin{enumerate}[(1)]
\item If $x$ is hit by a $d_{r + 1}$-differential, then we can pick $\widetilde{x}$ to be $\tau^r$-torsion.

\item If $x$ survives to the $E_{\infty}$-page and detects a class $\alpha \in \pi_{t - s}X$, then we can pick $\widetilde{x} \in \pi_{t - s,t}(\nu X)$ mapping to $\alpha$ when we invert $\tau$.
\end{enumerate}
\end{theorem}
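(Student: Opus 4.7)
The plan is to identify the synthetic spectral sequence associated with the $\tau$-adic tower $\{C\tau^r \otimes \nu X\}$ with the classical $\mathrm{H}\mathbb{F}_2$-Adams spectral sequence for $X$. Under this identification, the $\tau$-adic filtration on $\pi_{*,*}(\nu X)$ becomes (after $\tau$-inversion) the Adams filtration on $\pi_* X$, the $\tau$-Bockstein connecting maps become the Adams differentials, and every assertion in the theorem becomes a statement about lifting classes through a tower of cofiber sequences.

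First I would assemble \Cref{tau_cofib} into an exact couple via the long exact sequences
$$\cdots \to \pi_{a,b}(\Sigma^{0,-i}C\tau \otimes \nu X) \to \pi_{a,b}(C\tau^{i+1} \otimes \nu X) \to \pi_{a,b}(C\tau^i \otimes \nu X) \xrightarrow{\partial_i} \pi_{a-1,b}(\Sigma^{0,-i}C\tau \otimes \nu X) \to \cdots,$$
the connecting map $\partial_i$ being (a desuspension of) the $\tau$-Bockstein. By \Cref{homo_ext}, the $E_1$-page of the resulting spectral sequence is $\Ext_{A_*}^{*,*}(\mathbb{F}_2, H_*X)$, i.e.\ the Adams $E_2$-page. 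The main technical step, and the principal obstacle, is to verify that this synthetic spectral sequence coincides with the Adams spectral sequence; in particular, the $\tau$-Bockstein $C\tau^r \to \Sigma^{1,-r}C\tau$ must realize the Adams differential $d_{r+1}$ up to sign. This is handled by comparing the $\tau$-adic tower to a standard Adams resolution, which is essentially the content of the cited Burklund--Hahn--Senger result.

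Granting this identification, $(1) \Leftrightarrow (2)$ follows by induction on $r$: a class lifts from $\pi(C\tau^i \otimes \nu X)$ to $\pi(C\tau^{i+1} \otimes \nu X)$ iff the $\tau$-Bockstein vanishes on it, hence iff the corresponding Adams differential vanishes on $x$. Then $(2) \Leftrightarrow (3)$ is immediate, since the $\tau$-Bockstein of any lift to $C\tau^r$ is a class in $\pi(\Sigma^{1,-r}C\tau \otimes \nu X)$ equal to $-d_{r+1}(x)$ under the identification.

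For a permanent cycle $x$, $\mathrm{H}\mathbb{F}_2$-nilpotent completeness and strong convergence yield $\nu X \simeq \lim_r C\tau^r \otimes \nu X$, so the compatible finite-stage lifts from the previous paragraph assemble into a class $\widetilde{x} \in \pi_{t-s,t}(\nu X)$. The nonvanishing $\tau^{r-1}\widetilde{x} \ne 0$ is a diagram chase in the $\tau$-adic tower: $\tau^{r-1}\widetilde{x} = 0$ would force $\widetilde{x}$ to factor through a lower stage in a way incompatible with $x$ having Adams filtration exactly $s$ and surviving to $E_{r+1}$. The detection and filtration claims then follow from \Cref{tau_inv}(3) (inverting $\tau$ returns to ordinary spectra) combined with \Cref{filt_lift} (Adams filtration is read off from $\tau$-divisibility). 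Finally, the freedom to choose $\widetilde{x}$ to be $\tau^r$-torsion when $x$ is hit by a $d_{r+1}$-differential, or to hit a prescribed $\alpha$ on the nose when $x$ detects $\alpha$, is obtained by modifying $\widetilde{x}$ by an element in the kernel of the appropriate map in the $\tau$-adic tower; such modifications exist precisely because these kernels contain classes with the desired $\tau$-torsion or $\tau$-invertible properties.
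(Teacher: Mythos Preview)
The paper does not prove this statement: it is quoted verbatim as \cite[Theorem 9.19]{burklund2019boundaries} and used as a black box throughout \Cref{main_section}. There is therefore no proof in the paper to compare your proposal against.

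That said, your outline is a faithful sketch of the argument in the cited source. You correctly identify the key technical input---that the $\tau$-Bockstein tower for $\nu X$ realizes the Adams spectral sequence for $X$, with the Bockstein maps playing the role of the differentials---and you correctly flag this as the step requiring real work (comparison with a standard Adams resolution). The remaining deductions (lifting through the tower iff differentials vanish, assembling a permanent cycle into a class in $\pi_{*,*}(\nu X)$ via $\tau$-completeness, reading off Adams filtration from $\tau$-divisibility) are the expected formal consequences once that identification is in hand. One minor point: your appeal to \Cref{filt_lift} for the filtration claim is slightly imprecise, since that lemma concerns lifting maps of spectra rather than the filtration on homotopy groups directly; the relevant statement is \Cref{synth_filt}, which is itself a corollary of the theorem you are proving.
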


\begin{corollary}[\protect{\cite[Corollary 9.21]{burklund2019boundaries}}]
\label{synth_filt}
Let $X$ be an $\mathrm{H}\mathbb{F}_2$-nilpotent complete spectrum with strongly convergent Adams spectral sequence. Then the decreasing filtration of $\pi_{t - s}X$ given by $$F^s\pi_{t - s}X \coloneqq \im(\pi_{t - s,t}(\nu X) \to \pi_{t - s}X)$$ coincides with the Adams filtration.
\end{corollary}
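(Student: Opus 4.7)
The plan is to show the two filtrations coincide by a double inclusion, using \Cref{synth_ass} as the main tool and exploiting that inverting $\tau$ is the same as passing from $\nu X$ to $X$ (by \Cref{tau_inv}(3)).

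For the inclusion $F^s_{\mathrm{Adams}}\pi_{t-s}X \subseteq F^s\pi_{t-s}X$, I would suppose $\alpha \in \pi_{t-s}X$ has Adams filtration $s_0 \ge s$, detected by some permanent cycle $x$ on the $E_\infty$-page in bidegree $(s_0, t_0)$ with $t_0 - s_0 = t - s$. By the last clause of \Cref{synth_ass}, I can choose a lift $\widetilde{x} \in \pi_{t-s, t_0}(\nu X)$ mapping to $\alpha$ under $\tau$-inversion. Since multiplication by $\tau$ lowers the second grading by $1$ and becomes the identity after inverting $\tau$, the class $\tau^{t_0 - t}\widetilde{x} \in \pi_{t-s, t}(\nu X)$ also maps to $\alpha$, so $\alpha \in F^s\pi_{t-s}X$.

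For the reverse inclusion, I would suppose $\alpha = \rho(\widetilde{\alpha})$ for some $\widetilde{\alpha} \in \pi_{t-s, t}(\nu X)$, where $\rho$ denotes the map to $\pi_{t-s}X$ induced by $\tau$-inversion. The key step is to examine the image $x$ of $\widetilde{\alpha}$ in $\pi_{t-s, t}(C\tau \otimes \nu X) \cong \Ext^{s,t}_{A_*}(\mathbb{F}_2, H_*X) = E_2^{s,t}$, obtained via the natural isomorphism of \Cref{homo_ext}. If $x \ne 0$, then since $\widetilde{\alpha}$ provides a lift all the way up the $C\tau^r$-tower, \Cref{synth_ass} tells us $x$ is a permanent cycle that survives to $E_\infty$, and the image of $\widetilde{\alpha}$ in $\pi_{t-s}X$ (namely $\alpha$) has Adams filtration exactly $s$.

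The remaining case, which I expect is the one point requiring care, is when $x = 0$. Then $\widetilde{\alpha}$ lies in the kernel of reduction mod $\tau$, hence in the image of $\tau \colon \Sigma^{0,-1}\nu X \to \nu X$, so $\widetilde{\alpha} = \tau \widetilde{\alpha}'$ for some $\widetilde{\alpha}' \in \pi_{t-s, t+1}(\nu X)$. Since $\tau$ acts as the identity after inversion, $\alpha = \rho(\widetilde{\alpha}')$ as well, and so $\alpha \in F^{s+1}\pi_{t-s}X$; iterating this argument, either we eventually produce a $\widetilde{\alpha}^{(k)} \in \pi_{t-s, t+k}(\nu X)$ with nonzero $C\tau$-reduction and conclude $\alpha$ has Adams filtration $s + k \ge s$ by the previous paragraph, or the iteration never terminates and $\alpha$ lies in $F^{s+k}_{\mathrm{Adams}}$ for all $k$. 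The latter forces $\alpha = 0$ by the strong convergence and $\mathrm{H}\mathbb{F}_2$-nilpotent completeness hypothesis, since these ensure $\bigcap_k F^k_{\mathrm{Adams}}\pi_{t-s}X = 0$. Either way $\alpha \in F^s_{\mathrm{Adams}}\pi_{t-s}X$, completing the proof.
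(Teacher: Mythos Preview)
The paper does not supply its own proof of this corollary; it is quoted from \cite{burklund2019boundaries} and placed immediately after \Cref{synth_ass}, with the understanding that it follows from that theorem. Your strategy of extracting both inclusions from \Cref{synth_ass} is therefore exactly the intended one, and your argument for the inclusion $F^s_{\mathrm{Adams}}\subseteq F^s$ is correct.

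The reverse inclusion has a gap. When the $C\tau$-reduction $x$ of $\widetilde{\alpha}$ is nonzero, you invoke \Cref{synth_ass} to say that $x$ ``survives to $E_\infty$'' and hence that $\alpha$ has Adams filtration $s$. But lifting to every $C\tau^r$ only shows that $x$ is a \emph{permanent cycle}; it does not exclude $x$ being a \emph{boundary}, and in that case clause~(2) of \Cref{synth_ass} does not apply. Relatedly, in your non-terminating branch you assert $\alpha\in F^{s+k}_{\mathrm{Adams}}$ for all $k$, but the iteration only exhibits preimages in $\pi_{t-s,t+k}(\nu X)$, i.e.\ shows $\alpha\in F^{s+k}$ for the \emph{synthetic} filtration; passing from this to the Adams filtration is exactly the statement being proved, so the appeal to $\bigcap_k F^k_{\mathrm{Adams}}=0$ is circular as written.

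Both issues are handled by one extra move. If $x$ is a nonzero boundary, the final clause of \Cref{synth_ass} provides a $\tau$\nobreakdash-power\nobreakdash-torsion lift $\widetilde{x}$ of $x$; then $\widetilde{\alpha}-\widetilde{x}$ has zero $C\tau$-reduction, is therefore $\tau$-divisible, and still maps to $\alpha$ after inverting $\tau$, so this case folds into your $x=0$ iteration. To close the argument without circularity, argue by contradiction: if $\alpha\neq 0$ had Adams filtration $s_0<s$, your first inclusion gives $\widetilde{\alpha}_0\in\pi_{t-s,\,t-s+s_0}(\nu X)$ whose $C\tau$-reduction is the nonzero $E_\infty$-class detecting $\alpha$; then $\tau^{s-s_0}\widetilde{\alpha}-\widetilde{\alpha}_0$ has that same surviving $C\tau$-reduction yet maps to $0$ in $\pi_{t-s}X$, contradicting \Cref{synth_ass}(2).
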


\begin{definition}
Given a synthetic spectrum $Y$, we will denote the filtration from \Cref{synth_filt} by $F^{\bullet}\pi_*(\tau^{-1}Y)$. \Cref{synth_filt} shows that this coincides with the Adams filtration on $\pi_*X$ when $Y \simeq \nu X$, so there is no notational conflict.
\end{definition}

\begin{remark}
The results above show that $\tau$-Bocksteins for synthetic spectra can be thought of as generalized Adams differentials. Even though classes in $\pi_{*,*}(C\tau \otimes Y)$ no longer correspond to classes on the $E_2$-page of an Adams spectral sequence when $Y$ is not of the form $\nu X$, it will still make sense to talk about classes in $\pi_{*,*}(C\tau \otimes Y)$ killing other classes via $\tau$-Bocksteins. One can think of these $\tau$-Bocksteins as differentials in a \emph{modified} Adams spectral sequence for $\tau^{-1}Y$. This will be important in the proof of \Cref{mahowald_thm}.
\end{remark}

We finish this section by discussing an additional property of synthetic spectra called $\tau$-completeness. This property generalizes $\mathrm{H}\mathbb{F}_2$-nilpotent completeness for ordinary spectra.

\begin{definition}[\protect{\cite[Definition A.10]{burklund2019boundaries}}]
A synthetic spectrum $X$ is \emph{$\tau$-complete} if the natural map $$X \to \varprojlim_iC\tau^i \otimes X$$ is an equivalence, where the maps $C\tau^{i + 1} \otimes X \to C\tau^i \otimes X$ are those from \Cref{tau_cofib}. In other words, $X$ is $\tau$-complete if the $\tau$-Bockstein tower is convergent.
\end{definition}

\begin{proposition}[\protect{\cite[Proposition A.13]{burklund2019boundaries}}]
\label{2_tau_comp}
Let $X$ be an ordinary spectrum. Then $X$ is $\mathrm{H}\mathbb{F}_2$-nilpotent complete if and only if $\nu X$ is $\tau$-complete.
\end{proposition}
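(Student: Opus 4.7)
The plan is to interpret the $\tau$-adic tower $\{C\tau^i \otimes \nu X\}_i$ as a synthetic lift of the $\mathrm{H}\mathbb{F}_2$-Adams tower of $X$. Under this interpretation, $\tau$-completeness of $\nu X$ and $\mathrm{H}\mathbb{F}_2$-nilpotent completeness of $X$ become two manifestations of the same convergence statement for the Adams tower.

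First, I would start with the $\mathrm{H}\mathbb{F}_2$-Adams tower $X = X^{(0)} \leftarrow X^{(1)} \leftarrow \cdots$ of $X$, whose cofibers $X^{(s-1)}/X^{(s)}$ are $\mathrm{H}\mathbb{F}_2$-modules and whose transition maps induce zero on $\mathrm{H}\mathbb{F}_2$-homology. Iterating the canonical $\tau$-division of \Cref{tau_div}, each composite $\nu X^{(s)} \to \nu X$ factors canonically through $\Sigma^{0,-s}\nu X \xrightarrow{\tau^s} \nu X$. Using \Cref{cofib_synth}---which applies because each Adams cofiber sequence gives a short exact sequence on $\mathrm{H}\mathbb{F}_2$-homology---I would inductively build compatible synthetic cofiber sequences identifying $C\tau^s \otimes \nu X$ (the cofiber of $\Sigma^{0,-s}\nu X \xrightarrow{\tau^s} \nu X$) with a synthetic lift of the finite Adams truncation $X/X^{(s)}$.

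Passing to inverse limits over $s$ and invoking a connectivity argument (the fibers of the synthetic Adams tower live in strictly increasing $\tau$-filtration), I would promote the identification to a natural equivalence $\varprojlim_s C\tau^s \otimes \nu X \simeq \nu(X^\wedge_{\mathrm{H}\mathbb{F}_2})$, under which the $\tau$-completion map $\nu X \to \varprojlim_s C\tau^s \otimes \nu X$ is identified with $\nu$ applied to the nilpotent completion map $X \to X^\wedge_{\mathrm{H}\mathbb{F}_2}$. Since $\nu$ reflects equivalences (by inverting $\tau$ via \Cref{tau_inv}), this synthetic map is an equivalence iff $X \to X^\wedge_{\mathrm{H}\mathbb{F}_2}$ is, which is exactly the condition that $X$ be $\mathrm{H}\mathbb{F}_2$-nilpotent complete.

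The main obstacle is the second step: $\nu$ preserves filtered colimits but not arbitrary inverse limits, so identifying $\varprojlim_s C\tau^s \otimes \nu X$ with $\nu(X^\wedge_{\mathrm{H}\mathbb{F}_2})$ requires a careful convergence argument. This likely proceeds via connectivity estimates on the synthetic Adams tower together with a more concrete description of $\nu$, such as Pstr\k{a}gowski's definition via $\mathrm{H}\mathbb{F}_2$-injective resolutions.
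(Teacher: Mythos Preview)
The paper does not prove this proposition; it is simply quoted from \cite[Proposition A.11]{burklund2019boundaries} with no argument given. So there is no in-paper proof to compare your proposal against.

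Your outline is the natural strategy and is essentially that of the cited reference: identify the $\tau$-adic tower $\{C\tau^s \otimes \nu X\}$ with a synthetic lift of the $\mathrm{H}\mathbb{F}_2$-Adams tower of $X$, and then compare the two completions. You correctly isolate the main technical point, namely that $\nu$ need not preserve inverse limits, and you correctly note that a connectivity argument on the synthetic Adams tower is what resolves it. One small sharpening: for the converse direction you invoke ``$\nu$ reflects equivalences,'' but to apply that you must first know that $\varprojlim_s C\tau^s \otimes \nu X$ is of the form $\nu(-)$, i.e.\ you already need the identification $\varprojlim_s C\tau^s \otimes \nu X \simeq \nu(X^\wedge_{\mathrm{H}\mathbb{F}_2})$. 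That is the same limit-comparison issue as in the forward direction, and the same connectivity input handles it. With that caveat, your plan is correct and matches the cited argument.
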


\begin{corollary}
$C(\widetilde{2})$ and $\widetilde{Y}$ are $\tau$-complete.
\end{corollary}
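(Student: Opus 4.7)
The plan is to reduce everything to \Cref{2_tau_comp} by identifying $C(\widetilde{2})$ and $\widetilde{Y}$ with $\nu$ applied to ordinary spectra, and then checking $\mathrm{H}\mathbb{F}_2$-nilpotent completeness of those ordinary spectra.

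First, I would observe that the identifications have essentially been made already in the excerpt. In \Cref{C2_examp} it is shown that $\nu C(2) \simeq C(\widetilde{2})$. Since $\nu$ is symmetric monoidal by \Cref{syn_exists}, we also get
\[
\nu Y \;=\; \nu\bigl(C(2) \otimes C(\eta)\bigr) \;\simeq\; \nu C(2) \otimes \nu C(\eta) \;\simeq\; C(\widetilde{2}) \otimes C(\widetilde{\eta}) \;=\; \widetilde{Y}.
\]
Thus \Cref{2_tau_comp} reduces the corollary to the statement that the ordinary spectra $C(2)$ and $Y = C(2) \otimes C(\eta)$ are $\mathrm{H}\mathbb{F}_2$-nilpotent complete.

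Next, I would dispatch this by noting that both $C(2)$ and $Y$ are finite spectra all of whose homotopy groups are killed by a power of $2$: $C(2)$ is $2$-torsion by construction (its identity is annihilated by $2$, as can be seen from the cofiber sequence $\mathbb{S} \xrightarrow{2} \mathbb{S} \to C(2)$), and $Y$ inherits this property since it is a $C(2)$-module. For a bounded-below spectrum with finitely generated mod $2$ homology in each degree that is already $2$-complete, $\mathrm{H}\mathbb{F}_2$-nilpotent completion agrees with $2$-completion, and both agree with the identity on $2$-torsion finite spectra. Hence $C(2)$ and $Y$ are $\mathrm{H}\mathbb{F}_2$-nilpotent complete.

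Combining these two steps with \Cref{2_tau_comp} yields that $C(\widetilde{2}) \simeq \nu C(2)$ and $\widetilde{Y} \simeq \nu Y$ are $\tau$-complete. There is no real obstacle here; the only point requiring even modest care is the standard fact that finite $2$-torsion spectra are $\mathrm{H}\mathbb{F}_2$-nilpotent complete, which can be cited from the standard literature on nilpotent completions or deduced by observing that the $\mathrm{H}\mathbb{F}_2$-based Adams tower on a finite $2$-torsion spectrum has connectivity tending to infinity.
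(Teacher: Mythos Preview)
Your approach is exactly the paper's: identify $C(\widetilde{2})\simeq\nu C(2)$ and $\widetilde{Y}\simeq\nu Y$, then invoke \Cref{2_tau_comp} together with the $\mathrm{H}\mathbb{F}_2$-nilpotent completeness of $C(2)$ and $Y$ (the paper simply cites Ravenel for the latter).

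One small slip worth correcting: the identity of $C(2)$ is \emph{not} annihilated by $2$; it is a classical fact that $2\cdot\id_{C(2)}\neq 0$ (it is detected by $\eta$), though $4\cdot\id_{C(2)}=0$. This does not affect your argument, since what you actually need is that $C(2)$ and $Y$ are bounded-below, have finite-type mod $2$ homology, and are already $2$-complete (their homotopy groups are finite $2$-groups), so $\mathrm{H}\mathbb{F}_2$-nilpotent completion agrees with the identity on them.
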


\begin{proof}
Both $C(2)$ and $Y$ are $\mathrm{H}\mathbb{F}_2$-nilpotent complete (see \cite[Lemma 2.1.15]{ravenel2003complex}). Since $C(\widetilde{2}) \simeq \nu C(2)$ and $\widetilde{Y} \simeq \nu Y$, \Cref{2_tau_comp} implies that $C(\widetilde{2})$ and $\widetilde{Y}$ are $\tau$-complete.
\end{proof}

\begin{proposition}
\label{lim_tau_comp}
Limits of $\tau$-complete synthetic spectra are $\tau$-complete.
\end{proposition}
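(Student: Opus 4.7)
The plan is to unwind both limits in sight and swap them, using that $C\tau^i$ is dualizable so that tensoring with it commutes with limits.

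First I would let $\{X_\alpha\}_{\alpha \in I}$ be a diagram of $\tau$-complete synthetic spectra and set $X \coloneqq \varprojlim_\alpha X_\alpha$. The goal is to show the natural map $X \to \varprojlim_i C\tau^i \otimes X$ is an equivalence. Since each $X_\alpha$ is $\tau$-complete, $X_\alpha \simeq \varprojlim_i C\tau^i \otimes X_\alpha$, so
\[
X \simeq \varprojlim_\alpha X_\alpha \simeq \varprojlim_\alpha \varprojlim_i C\tau^i \otimes X_\alpha \simeq \varprojlim_i \varprojlim_\alpha C\tau^i \otimes X_\alpha,
\]
where the last step is the standard fact that limits commute with limits.

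Next I would argue that $C\tau^i \otimes (-)$ preserves limits, whence $\varprojlim_\alpha C\tau^i \otimes X_\alpha \simeq C\tau^i \otimes X$ and we can conclude $X \simeq \varprojlim_i C\tau^i \otimes X$. The point here is that $C\tau^i$ is dualizable in $\mathrm{Syn}_{\mathrm{H}\mathbb{F}_2}$: it is obtained as the cofiber of the map $\tau^i \colon \mathbb{S}^{0,-i} \to \mathbb{S}^{0,0}$ between bigraded spheres, both of which are invertible (hence dualizable), and the full subcategory of dualizable objects in a stable symmetric monoidal $\infty$-category is closed under finite (co)limits. A dualizable object $D$ yields an adjunction $D \otimes (-) \dashv D^\vee \otimes (-)$, so in particular $C\tau^i \otimes (-)$ is a left adjoint with a further right adjoint and therefore preserves all small limits.

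The only mildly subtle step is the dualizability of $C\tau^i$ (and the fact that internal-hom out of a dualizable object is simply tensoring with the dual); everything else is a formal manipulation of limits. Once dualizability is in hand, the interchange of limits finishes the proof, and no convergence or finiteness hypothesis on the diagram is needed.
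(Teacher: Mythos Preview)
Your argument is correct and is essentially the same as the paper's: both use that $C\tau^i$ is dualizable so that $C\tau^i \otimes (-)$ preserves limits, and then interchange the two limits. The paper's proof is terser (it only cites dualizability of $C\tau$, leaving the extension to $C\tau^i$ implicit via the cofiber sequences of Lemma~\ref{tau_cofib}), whereas you give the direct argument that $C\tau^i$ is a cofiber of invertible objects; either justification works.
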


\begin{proof}
Since $C\tau^i$ is dualizable for each $i$ (see \cite[Remark 9.6]{burklund2019boundaries}), the functor $C\tau^i \otimes -$ commutes with limits. Then the proposition follows from the fact that limits commute with limits.
\end{proof}

\subsection{$v_1$-banded vanishing lines}
\label{band_intro}
The statement of our main result (\Cref{mahowald_thm}) is quite unwieldy. We would like to prove the existence of a line in the Adams spectral sequence for $C(2)$ above which the Adams spectral sequence is well-behaved in sufficiently high topological degree. Being ``well-behaved'' consists of the following key qualities: \begin{enumerate}[(1)]
\item Above some line of slope less than $\frac{1}{2}$, the spectral sequence collapses at some finite page.

\item The only classes that survive to the $E_{\infty}$-page live inside a band bounded by lines of slope $\frac{1}{2}$. These classes are \emph{$v_1$-periodic}.
\end{enumerate}

In order to prove our main result, we will want to analyze similar behavior in the bigraded homotopy of synthetic spectra that do not come from ordinary spectra. For these synthetic spectra, it is not possible to describe this behavior in terms of Adams spectral sequences. We make this phenomenon precise by using the following definition.

\begin{definition}[\protect{\cite[Definition 13.6]{burklund2019boundaries}}]
Let $X$ be a synthetic spectrum. We say that $X$ has a \emph{$v_1$-banded vanishing line} with \begin{itemize}
\item band intercepts $b \le d$

\item range of validity $v$

\item line of slope $m < \frac{1}{2}$ and intercept $c$

\item torsion bound $r$
\end{itemize}
if the following conditions hold: \begin{enumerate}[(1)]
\item Every $\tau$-power torsion class in $\pi_{t - s,t}X$ is $\tau^r$-torsion for $s \ge m(t - s) + c$ and $t - s \ge v$.

\item The natural map $$F^{\frac{1}{2}(t - s) + b}\pi_{t - s}(\tau^{-1}X) \to F^{m(t - s) + c}\pi_{t - s}(\tau^{-1}X)$$ is an isomorphism for $t - s \ge v$.

\item The composite $$F^{\frac{1}{2}(t - s) + b}\pi_{t - s}(\tau^{-1}X) \to \pi_{t - s}(\tau^{-1}X) \to \pi_{t - s}(L_{K(1)}\tau^{-1}X)$$ is an isomorphism for $t - s \ge v$.

\item $\pi_{t - s,t}X \cong 0$ for $s > \frac{1}{2}(t - s) + d$.
\end{enumerate}
More concisely, we will say that $X$ has a $v_1$-banded vanishing line with parameters $(b \le d,v,m,c,r)$.
\end{definition}

\begin{remark}
Given an $\mathrm{H}\mathbb{F}_2$-nilpotent complete ordinary spectrum $X$, we will say that the Adams spectral sequence of $X$ has a $v_1$-banded vanishing line with parameters $(b \le d,v,m,c,r)$ when the synthetic spectrum $\nu X$ has one.
\end{remark}

\begin{figure}
\centering
\includegraphics[scale=0.7]{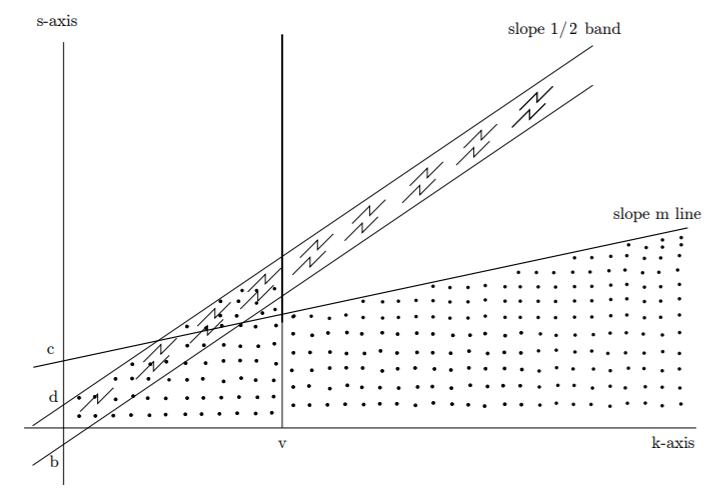}
\caption{A cartoon of the $E_{r + 1}$-page of the Adams spectral sequence for a $\mathrm{H}\mathbb{F}_2$-nilpotent complete spectrum $X$ admitting a $v_1$-banded vanishing line with parameters $(b \le d,v,m,c,r)$. Here, $k \coloneqq t - s$ is the topological degree. This picture is copied from Figure 1 in \cite{burklund2019boundaries}.}\label{v1_band}
\end{figure}

\section{A $v_1$-banded vanishing line for $C(2)$}
\begin{figure}[ht!]
\centering
\includegraphics[scale=0.7]{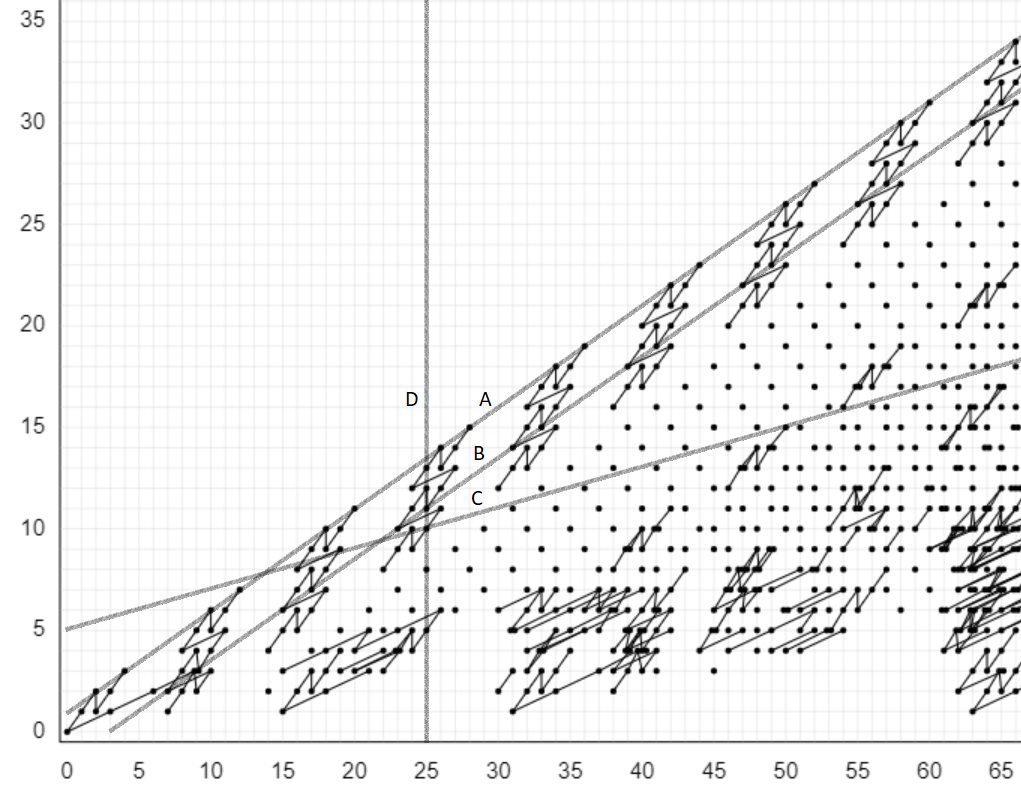}
\caption{The $E_2$-page of the Adams spectral sequence for $C(2)$. We will prove that all the classes in between lines B and C and to the right of line D vanish because of the differentials $d_2,d_3,d_4$. Moreover, it will follow from our $v_1$-banded vanishing line and the computation of the cardinality of $\pi_iL_{K(1)}C(2)$ that the surviving classes above line $C$ and to the right of line $D$ are precisely the ``lightning flashes'' between lines A and B. This image was created using Hood Chatham's Adams spectral sequence calculator, which can be found on his website.}\label{C2_ASS}
\end{figure}

\label{main_section}
We recall the statement of our main result, \Cref{mahowald_thm}.

\begin{theorem*}
Let $E_r^{s,t}(C(2))$ denote the $\mathrm{H}\mathbb{F}_2$-Adams spectral sequence for $C(2)$. Then \begin{enumerate}[(1)]
\item There exist constants $c$ and $v$ such that $E_5^{s,t} \cong E_{\infty}^{s,t}$ when $s \ge \frac{1}{5}(t - s) + c$ and $t - s \ge v$.

\item We can pick the constants $c$ and $v$ in part (1) such that for $t - s \ge v$, the inclusion of Adams filtrations $$F^{\frac{1}{2}(t - s) - 1.5}\pi_{t - s}C(2) \xhookrightarrow{} F^{\frac{1}{5}(t - s) + c}\pi_{t - s}C(2)$$ is an isomorphism.
\end{enumerate}
\end{theorem*}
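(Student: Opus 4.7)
The plan is to upgrade \Cref{mahowald_thm} to a synthetic statement: I will prove that $C(\widetilde{2}) \simeq \nu C(2)$ admits a $v_1$-banded vanishing line with parameters $(b \le d, v, m, c, r) = (-\tfrac{3}{2} \le d, v, \tfrac{1}{5}, c, 3)$ for suitable constants $d, v, c$. Part (1) of \Cref{mahowald_thm} follows from \Cref{synth_ass}: a $d_k$-differential with $k \ge 5$ hitting a class in the band would, via the synthetic dictionary, produce a lift that is $\tau^{k-1}$-torsion but not $\tau^{k-2}$-torsion, contradicting the torsion bound $r = 3$. Part (2) is condition (2) of the $v_1$-banded vanishing line with $b = -\tfrac{3}{2}$, identified with the Adams filtration via \Cref{synth_filt}.

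The central tool is the $\widetilde{\eta}$-cofiber sequence
\[
\Sigma^{1,2} C(\widetilde{2}) \xrightarrow{\widetilde{\eta}} C(\widetilde{2}) \to \widetilde{Y},
\]
which lets us bootstrap from the much stronger properties of $\widetilde{Y}$. \cite[Proposition 15.8]{burklund2019boundaries} supplies a $v_1$-banded vanishing line for $\widetilde{Y}$ with a sharper torsion bound than we need for $C(\widetilde{2})$. Smashing with $C\tau^i$ and chasing the resulting long exact sequences, one shows that every $\tau$-power-torsion class in $\pi_{*,*} C(\widetilde{2})$ either lifts from or projects nontrivially to $\pi_{*,*} \widetilde{Y}$, and that the $\widetilde{\eta}$-connecting map increases the $\tau$-torsion exponent by at most one. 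This propagates the torsion bound from $\widetilde{Y}$ to $r = 3$ for $C(\widetilde{2})$, and the slope-$\tfrac{1}{2}$ vanishing condition (4) transfers by the same exact-sequence argument.

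To verify condition (3), which identifies the bottom-of-band classes of $\nu C(2)$ with the $K(1)$-local homotopy of $C(2)$, I would compute $\pi_* L_{K(1)} C(2)$ directly. Starting from the classical 2-primary description of $\pi_* L_{K(1)} \mathbb{S}$ and applying the cofiber sequence $L_{K(1)} \mathbb{S} \xrightarrow{2} L_{K(1)} \mathbb{S} \to L_{K(1)} C(2)$ gives an explicit description of $\pi_* L_{K(1)} C(2)$. Matching this against the surviving $v_1$-periodic ``lightning flash'' families of \Cref{C2_ASS}, via Adams's periodicity isomorphism \cite{adams1966periodicity}, confirms that the comparison map is an isomorphism in high stems within the $b = -\tfrac{3}{2}$ band.

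The hardest step is tightening the torsion bound all the way down to $r = 3$, which is exactly what distinguishes $E_5 = E_\infty$ from the weaker page collapse of \cite[Proposition 15.8]{burklund2019boundaries}. A naive long-exact-sequence chase typically loses one in the torsion exponent each time one crosses the $\widetilde{\eta}$-connecting map, so a direct transfer from $\widetilde{Y}$ yields a bound strictly worse than $3$. Closing this gap requires isolating the specific classes in $\pi_{*,*} C(\widetilde{2})$ whose lifts could a priori be $\tau^4$-torsion without being $\tau^3$-torsion -- produced by $\widetilde{\eta}$-Bocksteins on the boundary of the band -- and showing that each such candidate is in fact already killed by an earlier $\tau$-Bockstein. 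I expect this to require a family-by-family case analysis along the $v_1$-periodic patterns, together with a Leibniz-style identity relating $\widetilde{\eta}$-Bocksteins and $\tau$-Bocksteins on $\widetilde{Y}$, bringing the $K(1)$-local comparison of the previous paragraph back in to rule out the remaining sporadic possibilities.
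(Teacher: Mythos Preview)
Your high-level target is correct: the whole point is to show that $C(\widetilde{2})$ has a $v_1$-banded vanishing line with $b=-\tfrac32$ and $r=3$, and your deduction of parts (1) and (2) from such a line via \Cref{synth_ass} and \Cref{synth_filt} is fine. The gap is in how you propose to get there.

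The cofiber sequence you plan to use,
\[
\Sigma^{1,2}C(\widetilde{2}) \xrightarrow{\widetilde{\eta}} C(\widetilde{2}) \to \widetilde{Y},
\]
has the unknown object $C(\widetilde{2})$ on two of its three terms, so any long-exact-sequence argument for the torsion bound is intrinsically self-referential. Your sentence ``the $\widetilde{\eta}$-connecting map increases the $\tau$-torsion exponent by at most one'' is not a theorem you can apply once: iterating $\widetilde{\eta}$ feeds $C(\widetilde{2})$ back into itself, and no finite bound drops out of the inequality $r_{C(\widetilde{2})}\le r_{\widetilde{Y}}+r_{C(\widetilde{2})}$. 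You implicitly acknowledge this when you say the naive chase ``loses one'' each time; the problem is that nothing in your outline breaks the loop.

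The paper's resolution is to replace this self-referential sequence by the finite filtration
\[
\widetilde{Y}\;\rightsquigarrow\;\widetilde{Y}_2=C(\widetilde{2})\otimes C(\widetilde{\eta}^2)\;\rightsquigarrow\;\widetilde{Y}_3=C(\widetilde{2})\otimes C(\widetilde{\eta}^3),
\]
using cofiber sequences whose outer terms are already controlled (e.g.\ $\Sigma^{1,2}\widetilde{Y}\to\widetilde{Y}_2\to\widetilde{Y}$ and $\Sigma^{2,4}\widetilde{Y}\to\widetilde{Y}_3\to\widetilde{Y}_2$), and then invoking the splitting $\widetilde{Y}_3\simeq C(\widetilde{2})\oplus\Sigma^{4,6}C(\widetilde{2})$ coming from $\eta^3=0$ on $C(2)$. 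This is what terminates the induction. The sharpening from the BHS bound $r=4$ to $r=3$ is achieved not by a general Leibniz-type identity, but by a concrete improvement of the band intercept $b$ for $\widetilde{Y}_2$ (from $-2.5$ to $-1.5$) before feeding it into \Cref{cofib_line}; that single unit in $b$ is exactly what drops $r_{\widetilde{Y}_3}$ by one.

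Finally, the $K(1)$-local input plays a different role from what you describe. You propose to compute $\pi_*L_{K(1)}C(2)$ and then ``match'' it against the lightning flashes to verify condition~(3) after the fact. The paper instead uses only the \emph{orders} $|\pi_*L_{K(1)}C(2)|$ (from \Cref{hom_moore}) as an input during the argument for $\widetilde{Y}_3$: a potential $d_3$ in the modified Adams spectral sequence for $\Sigma^{1,2}\widetilde{Y}_2$ is ruled out because, were it nonzero, the resulting bound $|\pi_{2k}L_{K(1)}(\Sigma C(2)\otimes C(\eta^2))|\le 2$ would contradict exactness of $\pi_*L_{K(1)}$ applied to $\Sigma^{1,2}\widetilde{Y}_2\to\widetilde{Y}_3\to\widetilde{Y}$. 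So the $K(1)$-local computation is not a consistency check on the output but the mechanism that kills the last obstruction.
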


In other words, part (1) of \Cref{mahowald_thm} states that the $v_1$-periodic Adams spectral sequence for $C(2)$ has no differentials past $d_2,d_3,d_4$. Part (2) states that the only surviving classes above this line fall between the lines $s = \frac{1}{2}(t - s) - 1.5$ and $s = \frac{1}{2}(t - s) + 1$.

In \cite{burklund2019boundaries}, Burklund, Hahn, and Senger prove part (2) of \Cref{mahowald_thm} and come close to a proof of part (1). The authors prove that the synthetic spectrum $C(\widetilde{2})$ has a $v_1$-banded vanishing line with parameters $$(b \le d,v,m,c,r) = \left(-3.5 \le 1,28 + \frac{1}{3},0.2,5,4\right).$$

We will complete the proof of \Cref{mahowald_thm} by proving the following stronger result:
\begin{theorem}
\label{mahowald_thm_better}
The synthetic spectrum $C(\widetilde{2})$ admits a $v_1$-banded vanishing line with parameters $$(b \le d,v,m,c,r) = \left(-1.5 \le 1,25,0.2,5,3\right).$$
\end{theorem}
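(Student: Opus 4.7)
The plan is to verify the four conditions of the $v_1$-banded vanishing line definition with parameters $(-1.5 \le 1, 25, 0.2, 5, 3)$, tightening the Burklund--Hahn--Senger result in three respects: the lower band intercept $b$ (from $-3.5$ to $-1.5$), the range of validity $v$ (from $28 + \frac{1}{3}$ to $25$), and most significantly the torsion bound $r$ (from $4$ to $3$). The torsion improvement is the delicate part and corresponds exactly to Mahowald's claim that $E_5 \cong E_\infty$ in the $v_1$-periodic region.

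The backbone of the argument is an explicit computation of $\pi_*L_{K(1)}C(2)$. Applying $L_{K(1)}$ to the cofiber sequence $\mathbb{S} \xrightarrow{2} \mathbb{S} \to C(2)$ and feeding in the well-known description of $\pi_*L_{K(1)}\mathbb{S}_{(2)}$ in terms of the image of $J$ yields a long exact sequence that pins down $\pi_*L_{K(1)}C(2)$ as an explicit $v_1$-periodic pattern. This pattern determines precisely which Adams filtrations in $\pi_*C(2)$ detect $K(1)$-locally nontrivial classes, and shows that they all lie between the lines $s = \frac{1}{2}(t-s) - 1.5$ and $s = \frac{1}{2}(t-s) + 1$, matching the claimed intercepts $b = -1.5$ and $d = 1$.

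Conditions (2), (3), (4) then follow from this computation together with standard inputs. Condition (4) is the classical Adams vanishing line for $C(2)$. For conditions (2) and (3), the $K(1)$-local computation identifies exactly the surviving $v_1$-periodic classes, so any nonzero class in $F^{\frac{1}{5}(t-s)+5}\pi_{t-s}C(2)$ with $t-s \ge 25$ is $v_1$-periodic, detected by the map to $L_{K(1)}C(2)$, and therefore lies in the narrow band between the two slope-$\frac{1}{2}$ lines. The range of validity $v = 25$ is checked by direct inspection of the low-dimensional part of the Adams chart for $C(2)$ against the $K(1)$-local pattern.

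The principal obstacle is condition (1), the torsion bound $r = 3$, equivalent to ruling out $d_5$ and higher differentials above the slope-$\frac{1}{5}$ line. To push from the Burklund--Hahn--Senger bound $r = 4$ down to $r = 3$, I would use the cofiber sequence $\mathbb{S}^{0,0} \to C(\widetilde{2}) \to \mathbb{S}^{1,1}$ from \Cref{C2_examp} to express any $\tau$-power torsion class in $\pi_{*,*}C(\widetilde{2})$ in terms of torsion coming from $\mathbb{S}^{0,0}$. However, the BHS bound on $\mathbb{S}^{0,0}$ alone does not suffice: extensions in the long exact sequence could in principle introduce fresh $\tau^4$-torsion on $C(\widetilde{2})$ even when none is present on either end term. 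To control these extensions, I would cross-check against the explicit $K(1)$-local computation above: any hypothetical $\tau^3$-nontorsion class above the slope-$\frac{1}{5}$ line would correspond to a surviving $v_1$-periodic class in $\pi_*L_{K(1)}C(2)$ lying outside the band identified in step two, and the explicit description of $\pi_*L_{K(1)}C(2)$ rules this out.
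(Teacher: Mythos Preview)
There is a genuine gap in your treatment of condition (1), the torsion bound $r=3$. Your final claim---that a $\tau$-power torsion class which fails to be $\tau^3$-torsion would correspond to a surviving class in $\pi_*L_{K(1)}C(2)$ lying outside the band---is false. Such a class is still $\tau$-torsion (indeed $\tau^4$-torsion by the BHS bound), so it dies under $\tau^{-1}$ and contributes nothing to $\pi_*L_{K(1)}C(2)$. In Adams spectral sequence language it is a permanent cycle hit by a $d_5$; the $K(1)$-local computation pins down $E_\infty$, i.e.\ \emph{which} classes survive, but says nothing about \emph{at which page} the non-surviving ones die. A $d_5$ above the band simply exchanges one dead class for another and is invisible to the order of $\pi_*L_{K(1)}C(2)$. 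Separately, the cofiber sequence $\mathbb{S}^{0,0}\to C(\widetilde{2})\to\mathbb{S}^{1,1}$ is not a usable route: no $v_1$-banded vanishing line for $\nu\mathbb{S}$ is available as input (BHS obtain their $C(\widetilde{2})$ bound from $\widetilde{Y}$, not from the sphere), and in any case \Cref{cofib_line} only degrades the torsion parameter when passing through a cofiber sequence.

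The paper's argument is organized quite differently and never studies differentials on $C(\widetilde{2})$ directly. It works instead with $\widetilde{Y}_2=C(\widetilde{2})\otimes C(\widetilde{\eta}^2)$ and $\widetilde{Y}_3=C(\widetilde{2})\otimes C(\widetilde{\eta}^3)$, whose $C\tau$-homotopy is extremely sparse near the band. First the lower band intercept for $\widetilde{Y}_2$ is raised to $-1.5$ by checking that the single candidate class just below the band lifts to $\tau$-torsion (it is the image of a $d_2$-victim in the Adams spectral sequence of $Y$). Feeding this improved $b_{\widetilde{Y}_2}$ into \Cref{cofib_line} for $\Sigma^{2,4}\widetilde{Y}\to\widetilde{Y}_3\to\widetilde{Y}_2$ already gives $r=3$ for $\widetilde{Y}_3$. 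The $K(1)$-local orders of $C(2)$ do enter, but only to rule out one specific potential $d_3$ in the $\tau$-Bockstein tower of $\Sigma^{1,2}\widetilde{Y}_2$, via an order count in the long exact sequence of $\Sigma^{1,2}\widetilde{Y}_2\to\widetilde{Y}_3\to\widetilde{Y}$ after $K(1)$-localization; this is needed to raise $b_{\widetilde{Y}_3}$ to $-1.5$. Finally the splitting $\widetilde{Y}_3\simeq C(\widetilde{2})\oplus\Sigma^{4,6}C(\widetilde{2})$ transfers all parameters to $C(\widetilde{2})$. The point is that the $K(1)$-local input is used to force one differential to \emph{vanish} (so that a class survives and the order count works out), which is the opposite of how you propose to use it.
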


\subsection{The $K(1)$-local Moore spectrum}
\label{K1_local_C2}
We will use the following information about the homotopy groups of $L_{K(1)}C(2)$. This information can be obtained by examining the long exact sequence in homotopy associated to the cofiber sequence $L_{K(1)}\mathbb{S} \xrightarrow[]{2} L_{K(1)}\mathbb{S} \to L_{K(1)}C(2)$. While the homotopy of $L_{K(1)}\mathbb{S}$ is well-known, the computation is not readily available in the literature, so we provide one below. We use the following characterization of $L_{K(1)}\mathbb{S}$ as a homotopy fixed point spectrum.

\begin{theorem}[\protect{\cite[Proposition 7.1]{DEVINATZ20041}}]
\label{K1_sphere}
Let $\widehat{KU}$ denoted 2-completed complex $K$-theory. This spectrum carries an action of the Morava stabilizer group $\mathbb{G}_1 \cong \mathbb{Z}_2^{\times}$, where for $i \in \mathbb{Z}_2^{\times}$ the action $$\psi^i: \widehat{KU} \to \widehat{KU}$$ is the classical $i$th Adams operation. Then the $K(1)$-local sphere $L_{K(1)}\mathbb{S}$ is homotopy equivalent to the homotopy fixed point spectrum $\widehat{KU}^{h\mathbb{Z}_2^{\times}}$.
\end{theorem}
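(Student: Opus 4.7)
The plan is to identify $\widehat{KU}$ with Morava $E$-theory $E_1$ at height $1$ and prime $2$, identify $\mathbb{Z}_2^\times$ with the Morava stabilizer group $\mathbb{G}_1$, and then invoke Devinatz--Hopkins descent. At height $1$ one can also give a more classical argument through the Adams--Baird tower, which I would run in parallel as a sanity check.

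First I would show that $\widehat{KU}$ is the Lubin--Tate spectrum at height $1$ and prime $2$, whose associated formal group is the multiplicative formal group $\widehat{\mathbb{G}}_m$ over $\mathbb{F}_2$. Since $\widehat{KU}$ is Landweber-exact with this formal group, the identification is automatic as homology theories, and the Hopkins--Miller/Goerss--Hopkins theorem upgrades it to an equivalence of $\mathrm{E}_\infty$-rings while producing a continuous $\mathrm{E}_\infty$-action of $\mathrm{Aut}_{\mathbb{F}_2}(\widehat{\mathbb{G}}_m) \cong \mathbb{Z}_2^\times$. I would then verify that this abstract action coincides with the classical Adams operations by tracking its effect on the Bott element: $i \in \mathbb{Z}_2^\times$ acts on the formal group parameter via the $[i]$-series, which on $\pi_{2k}\widehat{KU}$ gives multiplication by $i^k$, matching $\psi^i$.

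Next I would set up the descent. For each open subgroup $U \leq \mathbb{Z}_2^\times$ of finite index, form the $K(1)$-local $\mathrm{E}_\infty$-ring $\widehat{KU}^{hU}$. These assemble into a continuous $\mathbb{Z}_2^\times$-tower in the sense of Devinatz--Hopkins, and $\widehat{KU}^{h\mathbb{Z}_2^\times}$ is defined as the limit of this tower. Applying the general Devinatz--Hopkins descent theorem, which establishes $L_{K(n)}\mathbb{S} \simeq E_n^{h\mathbb{G}_n}$ by faithful descent along $L_{K(n)}\mathbb{S} \to E_n$, specializes at $n = 1$ to the claim.

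The main obstacle is constructing the continuous action and the continuous homotopy fixed points correctly, since $\mathbb{Z}_2^\times$ is profinite and naive homotopy fixed points for a discrete group do not capture the correct $K(1)$-local descent. At height $1$, however, one can bypass most of the heavy machinery by splitting $\mathbb{Z}_2^\times \cong \{\pm 1\} \times \mathbb{Z}_2$, first taking $\{\pm 1\}$-fixed points to recover $\widehat{KO}$, and then computing the $\mathbb{Z}_2$-fixed points as the fiber of $\psi^g - 1: \widehat{KO} \to \widehat{KO}$ for a topological generator $g$. This recovers the classical fiber sequence description of $L_{K(1)}\mathbb{S}$ and, by direct comparison of homotopy groups, confirms the equivalence $L_{K(1)}\mathbb{S} \simeq \widehat{KU}^{h\mathbb{Z}_2^\times}$.
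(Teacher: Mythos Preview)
The paper does not provide its own proof of this theorem: it is stated with a citation to Devinatz and then used as input for the subsequent corollary. There is therefore no in-paper argument to compare against. Your outline is a reasonable sketch of the standard proof, and in fact the classical height-$1$ route you mention at the end (splitting $\mathbb{Z}_2^\times \cong \{\pm 1\} \times (1 + 4\mathbb{Z}_2)$, passing to $\widehat{KO}$, and then taking the fiber of $\psi^g - 1$) is exactly the one the paper invokes in the proof sketch of the corollary that follows, with $g = 5$.
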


\begin{corollary}
\label{K1_sphere_hom}
The $K(1)$-local sphere $L_{K(1)}\mathbb{S}$ has the following homotopy groups: $$\pi_iL_{K(1)}\mathbb{S} \cong \begin{cases}\mathbb{Z}_2 & \text{if }i = -1 \\ \mathbb{Z}/2^{4 + v} & \text{if }i = 8k - 1,k = 2^vm,m\text{ odd} \\ \mathbb{Z}/2 \oplus \mathbb{Z}_2 & \text{if }i = 0 \\ \mathbb{Z}/2 & \text{if }i = 8k,k \ne 0 \\ \mathbb{Z}/2 \oplus \mathbb{Z}/2 & \text{if }i = 8k + 1 \\ \mathbb{Z}/2 & \text{if }i = 8k + 2 \\ \mathbb{Z}/8 & \text{if }i = 8k + 3 \\ 0 & \text{if }i \in \{8k + 4,8k + 5,8k + 6\}.\end{cases}$$
Here, $\mathbb{Z}_2$ denotes the 2-adic integers.
\end{corollary}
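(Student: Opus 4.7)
The plan is to apply \Cref{K1_sphere} and compute the homotopy fixed point spectrum $\widehat{KU}^{h\mathbb{Z}_2^\times}$ in stages. Topologically $\mathbb{Z}_2^\times \cong \{\pm 1\} \times \mathbb{Z}_2$, with the second factor being $1 + 4\mathbb{Z}_2$ (topologically generated by $5$, or equivalently by $3$ modulo $\{\pm 1\}$). Fixed points under complex conjugation $\psi^{-1}$ identify $\widehat{KU}^{h\{\pm 1\}}$ with the $2$-completed real $K$-theory spectrum $\widehat{KO}$, and further continuous homotopy fixed points for the $\mathbb{Z}_2$ acting through $\psi^3$ produce the standard fiber sequence
\begin{equation*}
L_{K(1)}\mathbb{S} \to \widehat{KO} \xrightarrow{\psi^3 - 1} \widehat{KO}.
\end{equation*}

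The associated long exact sequence in homotopy presents $\pi_n L_{K(1)}\mathbb{S}$ as an extension of $\ker(\psi^3 - 1)$ acting on $\pi_n\widehat{KO}$ by $\coker(\psi^3 - 1)$ acting on $\pi_{n+1}\widehat{KO}$. The input is the classical $8$-periodic pattern $(\pi_0, \ldots, \pi_7)\widehat{KO} = (\mathbb{Z}_2, \mathbb{Z}/2, \mathbb{Z}/2, 0, \mathbb{Z}_2, 0, 0, 0)$ together with the fact that $\psi^3$ acts on $\pi_{4k}\widehat{KO} \cong \mathbb{Z}_2$ by $3^{2k}$ and trivially on the degree-$1$ and degree-$2$ pieces (which have order $2$). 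I would then do a case analysis on $n \pmod 8$, the only arithmetic input being the $2$-adic valuations of $3^{2k} - 1$. Applying the lifting-the-exponent formula to $9 = 1 + 8$, one gets $\nu_2(3^{4k} - 1) = 4 + \nu_2(k)$ for $k \neq 0$, producing the $\mathbb{Z}/2^{4+v}$ in degrees $8k - 1$, and $\nu_2(3^{4k+2} - 1) = 3$, producing the $\mathbb{Z}/8$ in degrees $8k + 3$. The other residue classes fall out from the same bookkeeping, and the $\mathbb{Z}_2$ in degree $-1$ comes from the full cokernel in degree $0$ (where $\psi^3$ acts as the identity on the unit).

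The main obstacle is resolving the two families of nontrivial extensions. In degree $0$ the sequence is $0 \to \mathbb{Z}/2 \to \pi_0 L_{K(1)}\mathbb{S} \to \mathbb{Z}_2 \to 0$; this splits because the unit map $\mathbb{S} \to L_{K(1)}\mathbb{S}$ provides a section to $\mathbb{Z}_2$, so $\pi_0 \cong \mathbb{Z}/2 \oplus \mathbb{Z}_2$. In degrees $8k + 1$ one has $0 \to \mathbb{Z}/2 \to \pi_{8k+1} \to \mathbb{Z}/2 \to 0$, and distinguishing $\mathbb{Z}/2 \oplus \mathbb{Z}/2$ from $\mathbb{Z}/4$ requires showing that the lift of the class $\eta \cdot \beta^k \in \pi_{8k+1}\widehat{KO}$ to $\pi_{8k+1} L_{K(1)}\mathbb{S}$ is genuinely $2$-torsion. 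The cleanest argument is multiplicative: realize this lift as $\eta$ times a Bott-periodic permanent cycle in $\pi_{8k} L_{K(1)}\mathbb{S}$, and use the relation $2\eta = 0$ already present in $\pi_*\mathbb{S}$. All other residue classes modulo $8$ have trivial or vacuous extensions, so this completes the computation.
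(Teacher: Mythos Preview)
Your overall strategy coincides with the paper's: split $\mathbb{Z}_2^\times$, identify $\widehat{KU}^{h\{\pm1\}}$ with $\widehat{KO}$, and read off the answer from the long exact sequence of the fiber sequence $L_{K(1)}\mathbb{S}\to\widehat{KO}\xrightarrow{\psi^g-1}\widehat{KO}$ (the paper uses $g=5$, you use $g=3$; since $\psi^{-1}$ acts trivially on $\widehat{KO}$ these yield the same fiber). Your explicit $2$-adic bookkeeping and your splitting of the degree-$0$ extension via the unit map are both fine and in fact more detailed than the paper's sketch.

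The gap is in your resolution of the extension in degree $8k+1$ for $k\neq 0$. You propose to write a lift of $\eta\beta^k\in\pi_{8k+1}\widehat{KO}$ as $\eta\cdot x$ for some $x\in\pi_{8k}L_{K(1)}\mathbb{S}$ and then conclude $2(\eta x)=0$. But no such $x$ exists: for $k\neq 0$ the map $\pi_{8k}L_{K(1)}\mathbb{S}\to\pi_{8k}\widehat{KO}$ is zero, since its image is the kernel of multiplication by $3^{4k}-1$ on $\mathbb{Z}_2$. The generator of $\pi_{8k}L_{K(1)}\mathbb{S}\cong\mathbb{Z}/2$ arises as the boundary of $\eta\beta^k\in\pi_{8k+1}\widehat{KO}$, not as a lift of $\beta^k$. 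Consequently $\eta x$ maps to $0$ in $\widehat{KO}$ rather than to $\eta\beta^k$, and your argument does not touch the extension.

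The paper handles this case by passing to the Adams spectral sequence for the sphere. In the $v_1$-periodic band there are two classes $h_1P^{k-1}c_0$ and $P^kh_1$ in stem $8k+1$, and one rules out a hidden $2$-extension between them by observing that if $[P^kh_1]=2[h_1P^{k-1}c_0]$ held in homotopy, then multiplying by $\eta$ would give $\eta[P^kh_1]=0$, contradicting the survival of $P^kh_1^2$ to $E_\infty$. So the paper's argument also ultimately rests on $2\eta=0$, but applied in the opposite direction: it shows the \emph{subgroup} class is not divisible by $2$, rather than trying to exhibit a lift of the quotient class as an $\eta$-multiple.
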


\begin{proof}[Proof sketch]
There is a splitting of topological groups $\mathbb{Z}_2^{\times} \cong \{\pm 1\} \times (1 + 4\mathbb{Z}_2)^{\times}$, so \Cref{K1_sphere} implies that $$L_{K(1)}\mathbb{S} \simeq \widehat{KU}^{h\mathbb{Z}_2^{\times}} \simeq \left(\widehat{KU}^{h\{\pm 1\}}\right)^{h(1 + 4\mathbb{Z}_2)^{\times}} \simeq \widehat{KO}^{h(1 + 4\mathbb{Z}_2)^{\times}},$$ where $\widehat{KO}$ is 2-completed real K-theory. The fact that $KO \simeq KU^{hC_2}$ is well-known to follow from \cite{atiyah1966k} and a computation using a homotopy fixed point spectral sequence (see \cite[Proposition 5.3.1]{rognes2008} for a proof).

The group $(1 + 4\mathbb{Z}_2)^{\times}$ has 5 as a topological generator, so there is a cofiber sequence $$\widehat{KO}^{h(1 + 4\mathbb{Z}_2)^{\times}} \to \widehat{KO} \xrightarrow[]{1 - \psi^5} \widehat{KO}.$$ We can therefore use the long exact sequence in homotopy groups associated to the cofiber sequence $$L_{K(1)}\mathbb{S} \to \widehat{KO} \xrightarrow[]{1 - \psi^5} \widehat{KO}$$ to compute $\pi_*(L_{K(1)}\mathbb{S})$.

This finishes the proof of the theorem for all $i$ except $i \equiv 1 \pmod{8}$, in which case there is an extension problem $$0 \to \mathbb{Z}/2 \to \pi_iL_{K(1)}\mathbb{S} \to \mathbb{Z}/2 \to 0.$$ We can deduce that $\pi_i(L_{K(1)}\mathbb{S}) \cong \mathbb{Z}/2 \oplus \mathbb{Z}/2$ by examining the Adams spectral sequence for $\mathbb{S}$.

The following pattern is present in the $K(1)$-local part of the Adams spectral sequence for the sphere (see \cite[Corollary 1.3]{DAVIS198939}). For each integer $k > 0$, the dotted line indicates a possible hidden 2-extension in $\pi_{8k + 1}L_{K(1)}\mathbb{S}$ from the class in $(8k + 1,4k)$ to the class in $(8k + 1,4k + 1)$. The diagram below shows the pattern for $k = 3$.

\begin{center}
\begin{sseqdata}[ name = K1_S_ASS, Adams grading, classes = fill ]
\class["h_1P^2c_0" right](25,12)
\class["P^3h_1" right](25,13)
\class(26,14)
\class["P^3h_2" below](27,13)
\class(27,14)
\class(27,15)
\structline[dotted](25,12)(25,13)
\structline(25,13)(26,14)
\structline(26,14)(27,15)
\structline(27,13)(27,14)
\structline(27,14)(27,15)
\end{sseqdata}
\printpage[ name = K1_S_ASS, page = 2 ]
\end{center}
\end{proof}

We claim that this hidden 2-extension from $h_1P^{k - 1}c_0$ to $P^kh_1$ does not occur. Suppose it does. Then we have a relation in homotopy $$[P^kh_1] = 2[h_1P^{k - 1}c_0].$$ Then $$\eta[P^kh_1] = 2\eta[h_1P^{k - 1}c_0] = 0,$$ a contradiction. This shows that no extension occurs, and we have $$\pi_{8k + 1}L_{K(1)}\mathbb{S} \cong \mathbb{Z}/2 \oplus \mathbb{Z}/2.$$

\begin{corollary}
\label{hom_moore}
The homotopy groups of $L_{K(1)}C(2)$ have the following orders:

\begin{center}
\begin{tabular}{|c|c|}
\hline
$i \pmod{8}$ & $\left|\pi_i(L_{K(1)}C(2))\right|$ \\
\hline
0 & 4 \\
\hline
1 & 8 \\
\hline
2 & 8 \\
\hline
3 & 4 \\
\hline
4 & 2 \\
\hline
5 & 1 \\
\hline
6 & 1 \\
\hline
7 & 2 \\
\hline
\end{tabular}
\end{center}
\end{corollary}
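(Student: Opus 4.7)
The plan is to exploit the cofiber sequence $L_{K(1)}\mathbb{S} \xrightarrow{2} L_{K(1)}\mathbb{S} \to L_{K(1)}C(2)$ obtained by applying the (exact) $K(1)$-localization functor to the defining cofiber sequence of $C(2)$. The associated long exact sequence in homotopy breaks into short exact sequences
\[
0 \to \operatorname{coker}\bigl(2: \pi_i L_{K(1)}\mathbb{S} \to \pi_i L_{K(1)}\mathbb{S}\bigr) \to \pi_i L_{K(1)}C(2) \to \ker\bigl(2: \pi_{i-1}L_{K(1)}\mathbb{S} \to \pi_{i-1}L_{K(1)}\mathbb{S}\bigr) \to 0,
\]
so $|\pi_i L_{K(1)}C(2)|$ is the product of the order of the cokernel of 2 on $\pi_i L_{K(1)}\mathbb{S}$ and the order of the kernel of 2 on $\pi_{i-1} L_{K(1)}\mathbb{S}$, both of which are finite and known from \Cref{K1_sphere_hom}.

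First I would record the elementary fact that for any finitely generated $\mathbb{Z}_2$-module $M$ occurring as a value of $\pi_*L_{K(1)}\mathbb{S}$, the multiplication by $2$ map has kernel of order $2^{\operatorname{rk}_2(M)}$ and cokernel of order $2^{\operatorname{rk}_2(M) + (\text{free rank})}$, where $\operatorname{rk}_2$ denotes the number of cyclic summands of 2-power order. For each congruence class of $i \pmod 8$ I would then read off $\pi_i L_{K(1)}\mathbb{S}$ and $\pi_{i-1}L_{K(1)}\mathbb{S}$ from \Cref{K1_sphere_hom}, compute the order of the kernel and cokernel of multiplication by $2$ on each, and multiply. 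For instance, for $i \equiv 1 \pmod 8$ one gets $|\operatorname{coker}| = 4$ (from $\mathbb{Z}/2 \oplus \mathbb{Z}/2$) and $|\ker| = 2$ (from $\mathbb{Z}/2$), giving $8$; for $i \equiv 4 \pmod 8$ one gets $|\operatorname{coker}| = 1$ and $|\ker| = 2$ (from $\mathbb{Z}/8$), giving $2$; and so on through the remaining six cases.

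This is essentially bookkeeping, so there is no real obstacle beyond being careful about two points. The first is the borderline $i \equiv 0 \pmod 8$: since $\pi_0 L_{K(1)}\mathbb{S} = \mathbb{Z}/2 \oplus \mathbb{Z}_2$ has a free summand, $\operatorname{coker}(2)$ on $\pi_0$ has order $4$ rather than $2$, but $\pi_{-1}L_{K(1)}\mathbb{S} = \mathbb{Z}_2$ has trivial $2$-torsion, so $|\pi_0 L_{K(1)}C(2)| = 4 \cdot 1 = 4$, matching the $i \equiv 0$ row. Similarly for $i \equiv 7 \pmod 8$, the order $2^{4+v}$ of $\pi_i L_{K(1)}\mathbb{S}$ grows with $i$, but only the cokernel of $2$, which has order $2$, contributes, so the answer is independent of $v$. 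The second point is simply that the table records orders, not isomorphism classes, which is exactly what this counting argument yields without needing to resolve any extension problem in $\pi_* L_{K(1)}C(2)$ itself.
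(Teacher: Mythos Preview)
Your proposal is correct and takes exactly the same approach as the paper: the paper's proof consists of the single sentence ``This follows from considering the long exact sequence in homotopy associated to the cofiber sequence $L_{K(1)}\mathbb{S} \xrightarrow{2} L_{K(1)}\mathbb{S} \to L_{K(1)}C(2)$,'' and you have simply spelled out the kernel/cokernel bookkeeping that this entails. One small point: for $i\equiv 0\pmod 8$ you treat only $i=0$, but the case $i=8k$ with $k\neq 0$ (where $\pi_{8k}L_{K(1)}\mathbb{S}\cong\mathbb{Z}/2$ and $\pi_{8k-1}L_{K(1)}\mathbb{S}\cong\mathbb{Z}/2^{4+v}$) also gives $2\cdot 2=4$, so the table is consistent across the residue class.
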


\begin{proof}
This follows from considering the long exact sequence in homotopy associated to the cofiber sequence $$L_{K(1)}\mathbb{S} \xrightarrow[]{2} L_{K(1)}\mathbb{S} \to L_{K(1)}C(2).$$ This is a cofiber sequence because localization preserves cofiber sequences.
\end{proof}

There are a few extension problems to solve before we know the group structure of $\pi_*L_{K(1)}C(2)$. They are the following:
$$0 \to \mathbb{Z}/2 \to \pi_{8k}L_{K(1)}C(2) \to \mathbb{Z}/2 \to 0\quad (k \ne 0)$$
$$0 \to \mathbb{Z}/2 \oplus \mathbb{Z}/2 \to \pi_{8k + 1}L_{K(1)}C(2) \to \mathbb{Z}/2 \to 0$$
$$0 \to \mathbb{Z}/2 \to \pi_{8k + 2}L_{K(1)}C(2) \to \mathbb{Z}/2 \oplus \mathbb{Z}/2 \to 0$$
$$0 \to \mathbb{Z}/2 \to \pi_{8k + 3}L_{K(1)}C(2) \to \mathbb{Z}/2 \to 0$$
Here, the extension for $8k$ is trivial for $k = 0$, since \Cref{K1_sphere_hom} implies that $\pi_0L_{K(1)}C(2) \cong \mathbb{Z}/2 \oplus \mathbb{Z}/2$.

That said, we will need only the orders of these groups, not the groups themselves, to prove \Cref{mahowald_thm}. After proving \Cref{mahowald_thm}, however, we will be able to provide full descriptions of the homotopy groups.

\subsection{Proof of \Cref{mahowald_thm}}
In \cite{burklund2019boundaries}, the authors deduce a $v_1$-banded vanishing line for $C(\widetilde{2})$ from a $v_1$-banded vanishing line for $\widetilde{Y} \coloneqq C(\widetilde{2}) \otimes C(\widetilde{\eta})$. This last fact translates to a $v_1$-banded vanishing line in the Adams spectral sequence for $C(2) \otimes C(\eta)$, since $\widetilde{Y} \simeq \nu(C(2) \otimes C(\eta))$ (see \cite[Lemma 15.4]{burklund2019boundaries}). The authors then use the following lemma to go from $\widetilde{Y}$ to $C(\widetilde{2})$ through cofiber sequences.

\begin{lemma}[\protect{\cite[Proposition 13.11]{burklund2019boundaries}}]
\label{cofib_line}
Let $A \to B \to C$ be a cofiber sequence of synthetic spectra such that the following conditions hold: \begin{itemize}
\item $A$ has a $v_1$-banded vanishing line with parameters $(b_A \le d_A,v_A,m,c_A,r_A)$.

\item $C$ has a $v_1$-banded vanishing line with parameters $(b_C \le d_C,v_C,m,c_C,r_C)$.
\end{itemize}

Then $B$ has a $v_1$-banded vanishing line with parameters $(b_B \le d_B,v_B,m,c_B,r_B)$, where \begin{align*}
b_B &= \min(b_A,b_C - r_A) \\
d_B &= \max(d_A,d_C) \\
v_B &= \max\left(v_A + 1,v_C,\frac{c_B - b_B}{1/2 - m}\right) \\
c_B &= \max(c_A + r_A,c_C) \\
r_B &= r_A + \max\left(r_C,\left\lfloor\max(d_A,\min(d_A + r_C,d_C)) - b_C - \frac{1}{2}\right\rfloor\right).
\end{align*}
\end{lemma}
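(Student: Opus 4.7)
The plan is to derive each of the four conditions of the $v_1$-banded vanishing line for $B$ by applying long exact sequences to the cofiber sequence $A \to B \to C$. Since $C\tau^r$ is dualizable (\cite[Remark 9.6]{burklund2019boundaries}) and both $\tau^{-1}$ and $L_{K(1)}$ are exact localizations, tensoring with $C\tau^r$ and applying $\tau^{-1}$ or $L_{K(1)}\tau^{-1}$ all preserve cofiber sequences. One thus has interlocking long exact sequences in $\pi_{*,*}$, $\pi_{*,*}(C\tau^r \otimes -)$, $\pi_*(\tau^{-1}-)$, and $\pi_*(L_{K(1)}\tau^{-1}-)$, all compatible with the Adams-type filtration from \Cref{synth_filt}.

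Condition (4) is immediate: above the line $s = \tfrac{1}{2}(t-s) + \max(d_A,d_C)$ the outer terms of $\pi_{k,t}A \to \pi_{k,t}B \to \pi_{k,t}C$ both vanish, so $\pi_{k,t}B = 0$. Conditions (2) and (3) follow from five-lemma arguments applied to the ladder comparing the $F^{(1/2)(t-s)+b_B}$ and $F^{m(t-s)+c_B}$ pieces of $\pi_*(\tau^{-1}-)$ and $\pi_*(L_{K(1)}\tau^{-1}-)$ across $A, B, C$, using the corresponding isomorphisms for $A$ and $C$. The choice $b_B = \min(b_A, b_C - r_A)$ accounts for the possibility that lifting a class from $C$ back across the connecting map to $A$ requires a $\tau$-division that drops the effective Adams filtration by as much as $r_A$. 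The range $v_B = \max(v_A+1, v_C, (c_B-b_B)/(\tfrac{1}{2} - m))$ is forced so that everything sits in the regions prescribed for $A$ and $C$ (the $+1$ on $v_A$ reflecting that the connecting map decreases $t-s$ by one) and so that the two bounding lines actually cross to give a nonempty band.

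The main technical step is condition (1). Suppose $\beta \in \pi_{k,t}B$ is $\tau$-power torsion and lies in the region $s \ge m(t-s) + c_B$, $t-s \ge v_B$. Its image $\gamma \in \pi_{k,t}C$ is $\tau$-power torsion and, since $c_B \ge c_C$ and $v_B \ge v_C$, lies in the region for $C$, so $\tau^{r_C}\gamma = 0$. Exactness produces a lift $\alpha \in \pi_{k,\,t-r_C}A$ of $\tau^{r_C}\beta$. Either $s - r_C > \tfrac{1}{2}(t-s) + d_A$, in which case $\alpha = 0$ by condition (4) for $A$ and we are done with $r_C \le r_B$ multiplications by $\tau$; or $\alpha$ lies inside the $v_1$-band of $A$, where it need not a priori be $\tau$-power torsion. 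In the latter, harder, case one must further multiply $\beta$ by enough extra powers of $\tau$ to push $\alpha$ into a region where conditions (2) and (4) for $A$ (together with the filtration isomorphism (2) for $C$, used to pin down the non-torsion component) force any non-torsion contribution to vanish, after which a final $\tau^{r_A}$-multiplication kills what remains. Bookkeeping the worst-case number of extra powers required, as a function of $d_A$, $\min(d_A + r_C, d_C)$, and $b_C$, produces exactly the floor expression in the maximum appearing in the formula for $r_B$, and the additive $r_A$ is the final step.

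The principal obstacle is precisely this failure of $\alpha$ to be $\tau$-power torsion: the torsion bound for $A$ cannot be applied to the lift coming out of the connecting map directly, so one must pre-multiply by enough $\tau$ to make the vanishing, filtration-isomorphism, and torsion-bound conditions for $A$ and $C$ interact in a coordinated way. This interlocking is what forces the somewhat baroque formula for $r_B$ and makes this lemma the inductive engine used later to pass from a banded vanishing line for $\widetilde{Y}$ to one for $C(\widetilde{2})$.
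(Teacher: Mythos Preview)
The paper does not contain a proof of this lemma; it is stated and attributed to \cite[Proposition 13.11]{burklund2019boundaries} without further argument, and the text moves immediately on to quoting \Cref{Y_vanish}. So there is no ``paper's own proof'' to compare against.

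That said, your outline matches the structure of the argument in the cited source: condition~(4) is indeed immediate from the long exact sequence, conditions~(2) and~(3) are handled by comparing the filtered long exact sequences for $\tau^{-1}(-)$ and $L_{K(1)}\tau^{-1}(-)$ (your five-lemma remark), and the substantive work is in condition~(1). You have correctly identified the crux: after killing the image in $C$ with $\tau^{r_C}$ and lifting to $\alpha$ in $A$, the class $\alpha$ need not be $\tau$-power torsion, because $\beta$ being torsion does not force its lift to be. Your description of how to resolve this---multiply by further powers of $\tau$ until the lift is forced either above the vanishing line of $A$ or into a position where the band isomorphisms for $A$ and $C$ conspire to show the non-torsion part is zero, then apply $\tau^{r_A}$---is the right shape, and the bookkeeping does produce the floor expression in $r_B$.

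One caution: your phrase ``five-lemma arguments'' for conditions~(2) and~(3) is doing a lot of work. The filtration pieces $F^s\pi_*(\tau^{-1}-)$ are images, not kernels, so one has to check that the relevant truncated sequences are still exact in the needed range before invoking the five lemma; this is where the specific inequalities defining $b_B$, $c_B$, and $v_B$ enter. As a sketch this is fine, but in a full write-up you would want to make those exactness checks explicit.
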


In \cite{burklund2019boundaries}, the authors start with the following theorem, which is proven using the computation of the $E_2$-page of the $v_1$-periodic Adams spectral sequence of $Y = C(2) \otimes C(\eta)$ in \cite{davis1988v_1} and Miller's method for computing differentials in \cite{miller1981relations}.

\begin{theorem}[\protect{\cite[Theorem 14.1]{burklund2019boundaries}}]
\label{Y_vanish}
The synthetic spectrum $\widetilde{Y} = C(\widetilde{2}) \otimes C(\widetilde{\eta})$ has a $v_1$-banded vanishing line with parameters $(-1.5 \le 0,15,0.2,2.6,1)$.
\end{theorem}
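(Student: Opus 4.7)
The plan is to verify the four clauses of the $v_1$-banded vanishing line definition for $\widetilde{Y}=\nu Y$ by translating each of them, via \Cref{synth_ass} and \Cref{synth_filt}, into a classical statement about either the $\mathrm{H}\mathbb{F}_2$-Adams spectral sequence for $Y=C(2)\otimes C(\eta)$ or the homotopy of $L_{K(1)}Y$, and then verifying those classical statements using the explicit computation of the $v_1$-periodic Adams spectral sequence of $Y$ by Davis and Mahowald \cite{davis1988v_1} together with Miller's propagation technique \cite{miller1981relations}. The reason to work with $Y$ rather than directly with $C(2)$ is that $Y$ is type 2, so its $v_1$-periodic Adams $E_2$-page is organized into a very simple pattern of ``lightning flashes'' of width $\tfrac{3}{2}$ lying just below the edge line $s=\tfrac{1}{2}(t-s)$, and in particular all $v_1$-periodic classes are concentrated in the band $-\tfrac{3}{2}\le s-\tfrac{1}{2}(t-s)\le 0$.

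First I would record what Davis-Mahowald gives: above some line of slope $\tfrac{1}{5}$ the $E_2$-page for $Y$ consists precisely of these lightning flashes, which immediately yields clause (4) of the vanishing line ($\pi_{t-s,t}\widetilde{Y}=0$ for $s>\tfrac{1}{2}(t-s)$, i.e.\ $d=0$) and the band intercept $b=-1.5$. Next I would verify clause (1), the torsion bound $r=1$, which by \Cref{synth_ass} is equivalent to the statement that the $v_1$-periodic Adams spectral sequence for $Y$ collapses at $E_2$. This is the step where Miller's method is used: since the $v_1$-self-map acts on the Adams spectral sequence and commutes with differentials, it suffices to rule out differentials in a single fundamental domain. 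In that fundamental domain, a nonzero $d_r$ with $r\ge 2$ landing above the slope $0.2$ line would have to originate from a lightning-flash class and land in another lightning-flash class; chasing the bidegrees shows every such potential target either sits above the edge line (hence vanishes) or would produce a class in $\pi_*Y$ incompatible with the order of $\pi_*L_{K(1)}Y$.

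For clauses (2) and (3) I would compute $\pi_*L_{K(1)}Y$ by smashing the cofiber sequence $L_{K(1)}\mathbb{S}\xrightarrow{2}L_{K(1)}\mathbb{S}\to L_{K(1)}C(2)$ with $C(\eta)$, or equivalently by descending along the $\eta$-cofiber sequence from the computation in \Cref{hom_moore}; because $\eta$ acts trivially on $L_{K(1)}C(2)$ in the relevant range this gives a clean answer whose 2-adic valuations match exactly the count of surviving lightning-flash classes in each topological degree. The isomorphism between Adams filtration $F^{(t-s)/2-1.5}\pi_{t-s}Y$ and $\pi_{t-s}L_{K(1)}Y$ then follows from counting: every class above the slope-$0.2$ line is $v_1$-periodic and the Adams filtration map to $K(1)$-local homotopy is already injective by construction, so the equality of orders forces it to be an isomorphism. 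The weaker-line parameters $(m,c)=(0.2,2.6)$ come from the explicit vanishing region in Davis-Mahowald, and $v=15$ is fixed by checking when the fundamental domain picture first becomes valid without boundary corrections.

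The main obstacle I expect is clause (1): establishing that no differentials at all survive above the slope $0.2$ line. The Davis-Mahowald description is purely about $E_2$, and upgrading it to $E_\infty$ requires carefully combining Miller's $v_1$-equivariance argument with the order computation of $\pi_*L_{K(1)}Y$. Verifying that the constants $c=2.6$ and $v=15$ are small enough to give a usable fundamental domain, yet large enough to avoid pathological low-degree behavior where extra classes appear or where the lightning-flash structure has not yet stabilized, is the delicate bookkeeping part of the argument.
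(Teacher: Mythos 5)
First, a point of comparison: the paper does not prove \Cref{Y_vanish} at all --- it is imported verbatim as \cite[Theorem 14.1]{burklund2019boundaries}, and the text only records in one sentence that the BHS proof rests on the Davis--Mahowald computation \cite{davis1988v_1} and Miller's method \cite{miller1981relations}. Your overall strategy (translate the four clauses via \Cref{synth_ass} and \Cref{synth_filt} into statements about the Adams spectral sequence of $Y$ and about $\pi_*L_{K(1)}Y$, then verify them against Davis--Mahowald plus a $K(1)$-local order count) is consistent with that indication, and your treatment of clause (4) and of clauses (2)--(3) by matching orders against $\pi_*L_{K(1)}Y$ is in the right spirit.

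However, your treatment of clause (1) rests on two incorrect premises. The torsion bound $r=1$ does \emph{not} translate to ``the $v_1$-periodic Adams spectral sequence for $Y$ collapses at $E_2$''; by \Cref{synth_ass}, a class hit by a $d_{r+1}$ lifts to a $\tau^r$-torsion class, so $r=1$ is the statement that every class in the region which eventually dies is already hit by a $d_2$ --- it permits, and in fact requires, nonzero $d_2$ differentials. Relatedly, the $E_2$-page of $Y$ above the slope-$\frac{1}{5}$ line does \emph{not} consist only of the lightning flashes in the band $-1.5 \le s - \frac{1}{2}(t-s) \le 0$: \Cref{Y_ASS_ss1} exhibits classes strictly below the band (e.g.\ at $(30,12)$ and $(32,13)$, against a band floor of $13.5$ and $14.5$ respectively), and the paper's own proof of \Cref{Y2_vanish} depends essentially on these classes being permanent cycles of $Y$ that are killed by $d_2$'s (the class there called $a$). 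Under your premises clause (1) becomes vacuous and clauses (2)--(3) automatic, which cannot be right since the band is strictly narrower than the vanishing region. The actual content of the theorem is (i) identifying the $E_2$-classes between the $\frac{1}{5}$-line and the band, (ii) showing each is wiped out by a nonzero $d_2$ --- this is where Miller's method enters, as a comparison with the algebraic Novikov spectral sequence to \emph{compute} $d_2$'s on $v_1$-periodic classes, not merely as a $v_1$-equivariance/fundamental-domain reduction --- and (iii) showing no longer differentials occur. Your proposal as written skips (i) and (ii) entirely.
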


The authors then apply \Cref{cofib_line} to the following cofiber sequences (see \cite[Proposition 15.8]{burklund2019boundaries}): \begin{align}
\Sigma^{1,2}C(\widetilde{2}) \otimes C(\widetilde{\eta}) \to C(\widetilde{2}) \otimes C(\widetilde{\eta}^2) \to C(\widetilde{2}) \otimes C(\widetilde{\eta}) \label{cofib_Y2}\\
\Sigma^{1,2}C(\widetilde{2}) \otimes C(\widetilde{\eta}^2) \to C(\widetilde{2}) \otimes C(\widetilde{\eta}^3) \to C(\widetilde{2}) \otimes C(\widetilde{\eta}) \label{cofib_Y3_1}
\end{align}
Finally, they use the splitting (see \cite[Lemma 15.7]{burklund2019boundaries}) $$C(\widetilde{2}) \otimes C(\widetilde{\eta}^3) \simeq C(\widetilde{2}) \oplus \Sigma^{4,6}C(\widetilde{2})$$ to deduce a $v_1$-banded vanishing line for $C(\widetilde{2})$.

The synthetic spectra appearing in the above cofiber sequences will be important to us. We define $\widetilde{Y}_2 \coloneqq C(\widetilde{2}) \otimes C(\widetilde{\eta}^2)$ and $\widetilde{Y}_3 \coloneqq C(\widetilde{2}) \otimes C(\widetilde{\eta}^3)$.

The main content of \Cref{mahowald_thm} lies in decreasing $r_{\widetilde{Y}_3}$. We will do so by increasing $b_{\widetilde{Y}_2}$ and then applying \Cref{cofib_line} to the cofiber sequence (\ref{cofib_Y3_1}). Since the formula for $r_B$ in \Cref{cofib_line} contains $-b_C$, raising $b_{\widetilde{Y}_2}$ will allow us to lower $r_{\widetilde{Y}_3}$.

In concrete terms, this means raising the bottom edges of the bands obtained for $\widetilde{Y}_2$ and $\widetilde{Y}_3$ in \cite{burklund2019boundaries}. This entails showing that classes on and slightly above these bottom edges are killed by differentials. We prove that these differentials are nonzero by examining all possible classes that occur and exploiting previously obtained vanishing lines to show that these classes must be killed.

We first deduce a $v_1$-banded vanishing line for $\widetilde{Y}_2$. Applying \Cref{cofib_line} and \Cref{Y_vanish} yields a $v_1$-banded vanishing line for $\widetilde{Y}_2$ with parameters $(-2.5 \le 0.5,23,0.2,4.4,2)$. We wish to raise $b$ from $-2.5$ to $-1.5$.

\begin{proposition}
\label{Y2_vanish}
$\widetilde{Y}_2$ admits a $v_1$-banded vanishing line with parameters $(-1.5 \le 0.5,23,0.2,4.4,2)$.
\end{proposition}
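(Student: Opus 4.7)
The other parameters $(d, v, m, c, r) = (0.5, 23, 0.2, 4.4, 2)$ are unchanged from the previously established weaker vanishing line for $\widetilde{Y}_2$, so conditions (1) and (4) hold automatically; only conditions (2) and (3) with the sharper $b = -1.5$ need to be verified. Given the chain
$$F^{\tfrac{u}{2} - 1.5}\pi_u Y_2 \hookrightarrow F^{\tfrac{u}{2} - 2.5}\pi_u Y_2 \xrightarrow{\sim} \pi_u L_{K(1)}Y_2,$$
where the second arrow is already an isomorphism by the weaker result, both conditions reduce to showing $F^{\tfrac{u}{2} - 1.5}\pi_u Y_2 = F^{\tfrac{u}{2} - 2.5}\pi_u Y_2$ for $u \ge 23$. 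Equivalently, every $x \in \pi_{s, u}\widetilde{Y}_2$ with $s \in [\tfrac{u}{2} - 2.5, \tfrac{u}{2} - 1.5)$ and $u \ge 23$ must be $\tau$-power torsion so that it maps to zero in $\pi_u \tau^{-1}\widetilde{Y}_2 = \pi_u Y_2$.

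The plan is a short diagram chase in the long exact sequence in bigraded homotopy associated to cofiber sequence \ref{cofib_Y2}:
$$\pi_{s-1, u-1}\widetilde{Y} \xrightarrow{i} \pi_{s, u}\widetilde{Y}_2 \xrightarrow{p} \pi_{s, u}\widetilde{Y}.$$
The image $p(x) \in \pi_{s, u}\widetilde{Y}$ sits at a bidegree below the $\widetilde{Y}$-band $[-1.5, 0]$ but above the line $s = 0.2u + 2.6$ for $u \ge 17$; by \Cref{Y_vanish} (whose torsion bound is $r_{\widetilde{Y}} = 1$), $p(x)$ is $\tau$-torsion. Hence $p(\tau x) = \tau p(x) = 0$, so $\tau x = i(y)$ for some $y \in \pi_{s-2, u-1}\widetilde{Y}$. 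A bidegree check places $y$ again below the $\widetilde{Y}$-band and above the line $s = 0.2(u-1) + 2.6$ precisely when $u \ge 23$; so $\tau y = 0$ and therefore $\tau^2 x = i(\tau y) = 0$. Thus $x$ is $\tau^2$-torsion, vanishes in $\pi_u Y_2$, and the desired equality of filtrations follows.

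The main subtlety is the numerical bookkeeping that pins down the cutoff $v = 23$: the secondary class $y$ must lie above the torsion-bound line $s = 0.2(u-1) + 2.6$ of $\widetilde{Y}$, and substituting $s \ge \tfrac{u}{2} - 2.5$ yields exactly $u \ge 23$. Crucially, no input beyond the already-established band structure for $\widetilde{Y}$ from \Cref{Y_vanish} is required for this refinement — the $K(1)$-local homotopy of $Y_2$ is not used here — and $\widetilde{Y}$'s tight torsion bound $r_{\widetilde{Y}} = 1$ is what keeps the final torsion bound for $\widetilde{Y}_2$ at $r = 2$.
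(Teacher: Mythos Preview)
There is a genuine gap in the diagram chase. Your reformulation asserts that \emph{every} class $x \in \pi_{s,u}\widetilde{Y}_2$ with $\tfrac{u}{2}-2.5 \le s < \tfrac{u}{2}-1.5$ is $\tau$-power torsion, and then that $p(x)\in\pi_{s,u}\widetilde{Y}$ is $\tau$-torsion ``by \Cref{Y_vanish}''. But condition (1) of a $v_1$-banded vanishing line only says that $\tau$-power torsion classes are $\tau^{r}$-torsion in the indicated region; it does \emph{not} say that all classes there are $\tau$-power torsion. In fact they are not: any $\tau$-torsion-free class $w$ in the band (filtration between $\tfrac{u}{2}-1.5$ and $\tfrac{u}{2}+d$) gives a $\tau$-torsion-free class $\tau^{k}w$ sitting below the band. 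Concretely, for $\widetilde{Y}$ at $u=30$ there is a surviving class at filtration $15$, so $\tau^{2}$ times it lies in $\pi_{30,43}\widetilde{Y}$ (filtration $13$, exactly in your range) and is not $\tau$-torsion. Thus $p(x)$ need not be $\tau$-torsion, the chase breaks at step one, and the same issue recurs for $y$. More fatally, your target statement ``every such $x$ is $\tau$-power torsion'' is equivalent to $F^{\frac{u}{2}-2.5}\pi_{u}Y_2=0$, which is false whenever $\pi_{u}L_{K(1)}Y_2 \ne 0$.

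The fix is exactly what the paper does: work one level down, on the $E_2$-page $\pi_{*,*}(C\tau\otimes\widetilde{Y}_2)$ rather than on $\pi_{*,*}\widetilde{Y}_2$ itself. It suffices to show that every \emph{permanent cycle} in $\pi_{u,u+s}(C\tau\otimes\widetilde{Y}_2)$ in the range lifts to a $\tau$-power torsion class of $\widetilde{Y}_2$; then any $x\in\pi_{u,u+s}\widetilde{Y}_2$ agrees with such a torsion lift modulo $\im(\tau)$, which is precisely the statement $F^{s}=F^{s+1}$. At the $C\tau$-level the only possibly nonzero group in the range is an $\mathbb{F}_2$ at $(2k+1,k-2)$, and its unique generator pulls back along $\Sigma^{1,2}C\tau\otimes\widetilde{Y}\to C\tau\otimes\widetilde{Y}_2$ to a class $a$ which \emph{is} a permanent cycle lying in the vanishing region for $\Sigma^{1,2}\widetilde{Y}$; now condition (2) forces $a$ to be an eventual boundary, so \Cref{synth_ass} produces a $\tau$-torsion lift $\hat a$, and its image in $\widetilde{Y}_2$ is the desired torsion lift. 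Your long exact sequence idea is the right one, but it must be run after tensoring with $C\tau$, not before.
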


\begin{proof}
It is enough to show that for $$\frac{1}{2}(t - s) - 2.5 \le s < \frac{1}{2}(t - s) - 1.5$$ with $t - s \ge 23$, the classes in $\pi_{t - s,t}(C\tau \otimes \widetilde{Y}_2)$ that lift to $\pi_{t - s,t}\widetilde{Y}_2$ (we will call these \emph{permanent cycles}) have lifts that are killed by some power of $\tau$ (\emph{eventual boundaries}). Since we already have a $v_1$-banded vanishing line with $r = 2$, in fact all eventual boundaries will lift to $\tau^2$-torsion.

Our terminology comes from the relationship between bigraded homotopy groups and Adams spectral sequences. \Cref{synth_ass} is a precise formulation of this relationship for synthetic spectra obtained by applying $\nu$ to an ordinary spectrum. For $\widetilde{Y}_2$, which is not $\nu$ applied to an ordinary spectrum, it may be helpful to think of $\pi_{t - s,t}(C\tau \otimes \widetilde{Y}_2)$ as the $E_2^{s,t}$-term of a \emph{modified} Adams spectral sequence for $\tau^{-1}\widetilde{Y}_2$, if the reader is familiar with this notion.

\begin{figure}
\centering
\begin{sseqdata}[ name = Y_ASS, Adams grading, x range = {28}{32}, y range = {12}{15} ]
\class(28,14)
\class(29,13)
\class["a" left](30,12)
\class(30,15)
\class(31,14)
\class(32,13)
\structline(29,13)(31,14)

\class(27,12)
\structline(27,12)(29,13)

\class(33,15)
\structline(31,14)(33,15)
\end{sseqdata}
\printpage[ name = Y_ASS, page = 2 ]
\caption{The bigraded homotopy groups $\pi_{t - s,t}(C\tau \otimes \widetilde{Y})$. The line is the bottom edge of the band of \Cref{Y_vanish}. The exact homotopy groups are computed near the band in \cite[\S 14]{burklund2019boundaries}.}\label{Y_ASS_ss1}
\end{figure}

\begin{figure}
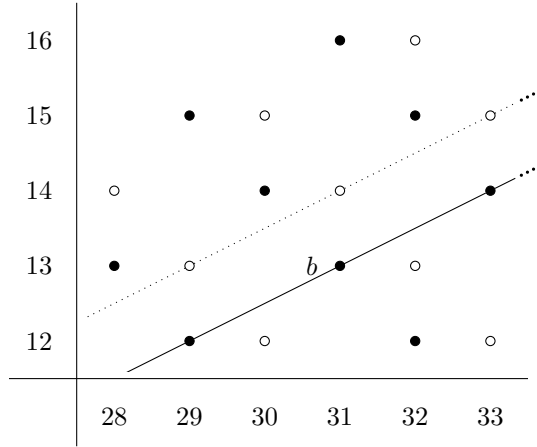

\centering
\begin{sseqdata}[ name = Y2_ASS, Adams grading, x range = {28}{33}, y range = {12}{16} ]
\class(28,14)
\class(29,13)
\class(30,12)
\class(30,15)
\class(31,14)
\class(32,13)
\class(29,15)
\class(30,14)
\class(31,13)
\class(31,16)
\class(32,15)
\class(33,14)
\class(32,12)
\class(33,12)
\class(28,13)
\class(29,12)
\class(32,16)
\class(33,15)
\classoptions[fill, black](29,15)
\classoptions[fill, black](30,14)
\classoptions["b" left, fill, black](31,13)
\classoptions[fill, black](31,16)
\classoptions[fill, black](32,15)
\classoptions[fill, black](33,14)
\classoptions[fill, black](32,12)
\classoptions[fill, black](28,13)
\classoptions[fill, black](29,12)
\structline(29,12)(33,14)
\structline[dotted](29,13)(31,14)
\structline[dotted](31,14)(33,15)

\class(27,12)
\structline[dotted](27,12)(29,13)

\class(35,16)
\structline[dotted](33,15)(35,16)

\class(27,11)
\structline(27,11)(29,12)

\class(35,15)
\structline(33,14)(35,15)
\end{sseqdata}
\printpage[ name = Y2_ASS, page = 2 ]
\caption{An upper bound on $\pi_{t - s,t}(C\tau \otimes \widetilde{Y}_2)$. Black dots indicate possible classes in the image of $\pi_{*,*}(\Sigma^{1,2}C\tau \otimes \widetilde{Y}) \to \pi_{*,*}(C\tau \otimes \widetilde{Y}_2)$, whereas the white dots indicate possible classes that have nonzero images along the map $\pi_{*,*}(C\tau \otimes \widetilde{Y}_2) \to \pi_{*,*}(C\tau \otimes \widetilde{Y})$. The solid line is the bottom edge of our initial band, and we want to raise it to the dotted line.}\label{Y2_ASS_ss1}
\end{figure}

Figures \ref{Y_ASS_ss1} and \ref{Y2_ASS_ss1} require some explanation. A dot in position $(t - s,s)$ corresponds to a copy of $\mathbb{F}_2$ in the bigraded homotopy group $\pi_{t - s,t}(C\tau \otimes -)$. \Cref{Y_ASS_ss1} is the bigraded homotopy of $C\tau \otimes \widetilde{Y}$ in this range. Meanwhile, \Cref{Y2_ASS_ss1} is only an upper bound obtained from the long exact sequence associated to the cofiber sequence $$\Sigma^{1,2}C\tau \otimes \widetilde{Y} \to C\tau \otimes \widetilde{Y}_2 \to C\tau \otimes \widetilde{Y}.$$

We obtain \Cref{Y2_ASS_ss1} from two copies of \Cref{Y_ASS_ss1}, with the unfilled dots being unmoved and the filled dots being shifted by $(1,1)$ (since they come from $\Sigma^{1,2}C\tau \otimes \widetilde{Y} \to C\tau \otimes \widetilde{Y}_2$).

Because $Y$ admits a $v_1$-self map \cite[Theorem 1.2]{davis1981v} and the region of interest lies in the $v_1$-periodic region of $\pi_{*,*}$ \cite[Lemma 14.21]{burklund2019boundaries}, the classes in $\pi_{*,*}(C\tau \otimes \widetilde{Y})$ that we are concerned with and, therefore, the upper bound on $\pi_{*,*}(C\tau \otimes \widetilde{Y}_2)$ (see \Cref{Y2_ASS_ss1}) are $v_1$-periodic, so they repeat in increments of $(1,2)$. Hence, we will only draw them in the indicated range.

Observe that the only possible nonzero homotopy groups in the range $$\frac{1}{2}(t - s) - 2.5 \le s < \frac{1}{2}(t - s) - 1.5$$ with $t - s \ge 23$ are $\pi_{2k + 1,(2k + 1) + (k - 2)}(C\tau \otimes \widetilde{Y}_2)$ for $k \ge 11$ and that these groups are either $\mathbb{F}_2$ or 0. Figures \ref{Y_ASS_ss1} and \ref{Y2_ASS_ss1} illustrate the case $k = 15$.

For brevity, let $i \coloneqq 2k + 1$ and $j \coloneqq k - 2$. In Figures \ref{Y_ASS_ss1} and \ref{Y2_ASS_ss1}, we have $i = 31$ and $j = 13$.

If $\pi_{i,i + j}(C\tau \otimes \widetilde{Y}) \cong 0$, then there exists no permanent cycle in $\pi_{i,i + j}(C\tau \otimes \widetilde{Y}_2)$, so there is nothing to check.

Now suppose $\pi_{i,i + j}(C\tau \otimes \widetilde{Y}_2) \cong \mathbb{F}_2$ with nonzero element $b$. Then by exactness, there is a unique element $$a \in \pi_{i,i + j}(\Sigma^{1,2}C\tau \otimes \widetilde{Y})$$ mapping to $b$ when we smash the cofiber sequence (\ref{cofib_Y2}) with $C\tau$. Because $\widetilde{Y}$ has a $v_1$-banded vanishing line with parameters $(-1.5 \le 0,15,0.2,2.6,1)$, $\Sigma^{1,2}\widetilde{Y}$ has a $v_1$-banded vanishing line with parameters $(-1 \le 0.5,16,0.2,3.4,1)$. Applying \Cref{synth_ass} to $Y$ shows that $a$ is a permanent cycle, since all the Adams differentials out of $(2k + 1,k - 2)$ are 0 for degree reasons. Since $k \ge 11$, we have $$\frac{1}{5}(2k + 1) + 3.4 \le k - 2 < \frac{1}{2}(2k + 1).$$ This means that $a$ lies in the vanishing region of $\widetilde{Y}$. As a permanent cycle in the vanishing region, $a$ lifts to a $\tau$-torsion class $\hat{a}\in \pi_{i,i + j}(\Sigma^{1,2}\widetilde{Y})$. Then the image of $\hat{a}$ under the map $$\pi_{i,i + j}(\Sigma^{1,2}\widetilde{Y}) \to \pi_{i,i + j}\widetilde{Y}_2$$ is a $\tau$-torsion class lifting $b$.

In the language of modified Adams spectral sequences, $b$ is killed by a $d_2$ induced by a $d_2$ in the Adams spectral sequence for $Y$.

Since there are no other possible permanent cycles in the range $$\frac{1}{2}(t - s) - 2.5 \le s < \frac{1}{2}(t - s) - 1.5$$ with $t - s \ge 23$, we have shown that $\widetilde{Y}_2$ admits a $v_1$-banded vanishing line with parameters $(-1.5 \le 0.5,23,0.2,4.4,2)$.
\end{proof}

\Cref{Y2_vanish} is good enough to establish the key part of Mahowald's theorem, which is that there exist no differentials higher than $d_4$ in the Adams spectral sequence for $C(2)$ above a line of the form $s = \frac{1}{5}(t - s) + c$ (i.e. $r_{\widetilde{Y}} = 3$). We can see this by applying \Cref{cofib_line} to the cofiber sequence $$\Sigma^{2,4}\widetilde{Y} \to \widetilde{Y}_3 \to \widetilde{Y}_2.$$

However, we will go further. In the following proof, we use similar methods to show an improved $v_1$-banded vanishing line for $\widetilde{Y}_3$. Since $\widetilde{Y}_3$ splits into copies of $C(\widetilde{2})$, this will completely prove \Cref{mahowald_thm_better} and thus \Cref{mahowald_thm}.

\begin{proof}[Proof of \Cref{mahowald_thm_better}]
Applying \Cref{cofib_line}, \Cref{Y_vanish}, and \Cref{Y2_vanish} to the cofiber sequence \begin{align}
\Sigma^{2,4}\widetilde{Y} \to \widetilde{Y}_3 \to \widetilde{Y}_2 \label{cofib_Y3_2},
\end{align}
we see that $\widetilde{Y}_3$ admits a $v_1$-banded vanishing line with parameters $(-2.5 \le 1,29,0.2,6.2,3)$. All that remains is to raise $b$ from $-2.5$ to $-1.5$, as the splitting $\widetilde{Y}_3 \simeq C(\widetilde{2}) \oplus \sigma^{4,6}C(\widetilde{2})$ will give us the desired $v_1$-banded vanishing line for $C(\widetilde{2})$.

Now recall the cofiber sequence (\ref{cofib_Y3_1}): $$\Sigma^{1,2}\widetilde{Y}_2 \to \widetilde{Y}_3 \to \widetilde{Y}.$$ We again get upper bounds on the homotopy of $C\tau \otimes \widetilde{Y}_3$ by exactness.

\begin{figure}
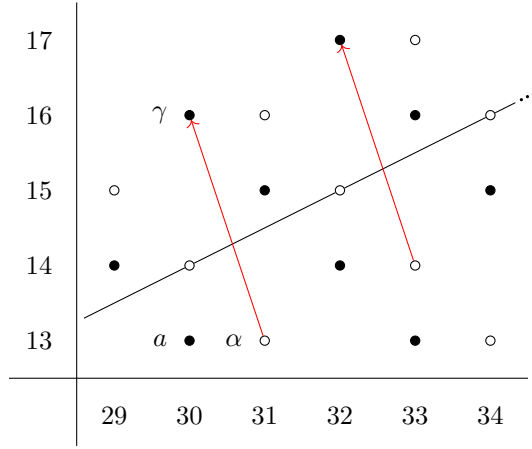

\centering
\begin{sseqdata}[ name = Y2_ASS_2, Adams grading, x range = {29}{34}, y range = {13}{17} ]
\class(29,15)
\class(30,14)
\class["\alpha" left](31,13)
\class(31,16)
\class(32,15)
\class(33,14)
\class["\gamma" left](30,16)
\class(31,15)
\class(32,14)
\class(32,17)
\class(33,16)
\class(34,15)
\class(33,13)
\class(34,13)
\class(29,14)
\class["a" left](30,13)
\class(33,17)
\class(34,16)
\classoptions[fill, black](30,16)
\classoptions[fill, black](31,15)
\classoptions[fill, black](32,14)
\classoptions[fill, black](32,17)
\classoptions[fill, black](33,16)
\classoptions[fill, black](34,15)
\classoptions[fill, black](33,13)
\classoptions[fill, black](29,14)
\classoptions[fill, black](30,13)
\structline(30,14)(32,15)
\structline(32,15)(34,16)

\d[red]3(31,13)
\d[red]3(33,14)

\class(28,13)
\structline(28,13)(30,14)

\class(36,17)
\structline(34,16)(36,17)
\end{sseqdata}
\printpage[ name = Y2_ASS_2, page = 3 ]
\caption{An upper bound on $\pi_{t - s,t}(\Sigma^{1,2}C\tau \otimes \widetilde{Y}_2)$. Black and white dots mean the same thing as they did in \Cref{Y2_ASS_ss1}. The line is the bottom edge of the band from \Cref{Y2_vanish}. We will show that the red differentials are 0.}\label{Y2_ASS_ss2}
\end{figure}

\begin{figure}
\centering
\begin{sseqdata}[ name = Y3_ASS_2, Adams grading, x range = {28}{34}, y range = {12}{17} ]
\class(29,15)
\class(30,14)
\class["\beta" left](31,13)
\class(31,16)
\class(32,15)
\class(33,14)
\class(30,16)
\class(31,15)
\class(32,14)
\class(32,17)
\class(33,16)
\class(34,15)
\class(33,13)
\class(34,13)
\class(29,14)
\class["b" left](30,13)
\class(33,17)
\class(34,16)
\class(31,12)
\class(33,12)
\class(34,12)
\class(32,12)
\class(34,12)
\class(28,13)
\class(29,12)
\class(28,12)
\class(28,15)
\classoptions[fill, black](30,16)
\classoptions[fill, black](31,15)
\classoptions[fill, black](32,14)
\classoptions[fill, black](32,17)
\classoptions[fill, black](33,16)
\classoptions[fill, black](34,15)
\classoptions[fill, black](33,13)
\classoptions[fill, black](29,14)
\classoptions[fill, black](30,13)
\classoptions[fill, black](31,12)
\classoptions[fill, black](33,12)
\classoptions[fill, black](34,12)
\classoptions[fill, black](28,12)
\classoptions[fill, black](28,15)

\class[rectangle](28,14)
\class[rectangle](29,13)
\class[rectangle](30,12)
\class[rectangle](30,15)
\class[rectangle](31,14)
\class[rectangle](32,13)
\class[rectangle](33,12)
\class[rectangle](32,16)
\class[rectangle](33,15)
\class[rectangle](34,14)
\class[rectangle](34,17)

\structline(29,12)(31,13)
\structline(31,13)(33,14)
\structline[dotted](29,13)(31,14)
\structline[dotted](31,14)(33,15)

\class(27,12)
\structline[dotted](27,12)(29,13)

\class(35,16)
\structline[dotted](33,15)(35,16)

\class(27,11)
\structline(27,11)(29,12)

\class(35,15)
\structline(33,14)(35,15)
\end{sseqdata}
\printpage[ name = Y3_ASS_2, page = 3 ]
\caption{An upper bound on $\pi_{t - s,t}(C\tau \otimes \widetilde{Y}_3)$. Dots indicate possible classes in the image of $\pi_{*,*}(\Sigma^{1,2}C\tau \otimes \widetilde{Y}_2) \to \pi_{*,*}(C\tau \otimes \widetilde{Y}_3)$, whereas squares indicate possible classes that have nonzero images along the map $\pi_{*,*}(C\tau \otimes \widetilde{Y}_3) \to \pi_{*,*}(C\tau \otimes \widetilde{Y})$. The solid line is the bottom edge of our initial band, and we want to raise it to the dotted line.}\label{Y3_ASS_ss2}
\end{figure}

\begin{figure}
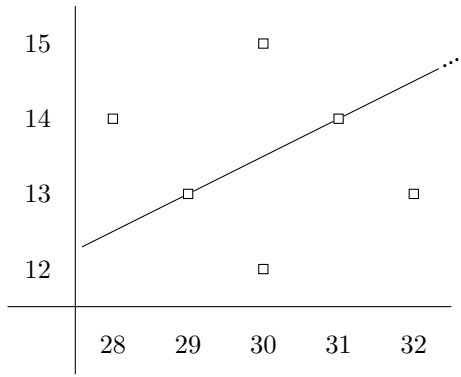

\centering
\begin{sseqdata}[ name = Y_ASS_2, Adams grading, x range = {28}{32}, y range = {12}{15} ]
\class[rectangle](28,14)
\class[rectangle](29,13)
\class[rectangle](30,12)
\class[rectangle](30,15)
\class[rectangle](31,14)
\class[rectangle](32,13)
\structline(29,13)(31,14)

\class(27,12)
\structline(27,12)(29,13)

\class(33,15)
\structline(31,14)(33,15)
\end{sseqdata}
\printpage[ name = Y_ASS_2, page = 2 ]
\caption{The bigraded homotopy groups $\pi_{t - s,t}(C\tau \otimes \widetilde{Y})$. The line is the bottom edge of the band of \Cref{Y_vanish}.}\label{Y_ASS_ss2}
\end{figure}

We will show that all permanent cycles of $\widetilde{Y}_3$ in the range $$\frac{1}{2}(t - s) - 2.5 \le s < \frac{1}{2}(t - s) - 1.5$$ with $t - s \ge 29$ are eventual boundaries. In particular, we will show that these permanent cycles lift to $\tau^3$-torsion classes of $\pi_{t - s,t}\widetilde{Y}_3$. The groups $\pi_{t - s,t}(C\tau \otimes \widetilde{Y})$ are 0 in this range, so exactness implies that all classes we have to consider are in the image of $\pi_{t - s,t}(\Sigma^{1,2}C\tau \otimes \widetilde{Y}_2)$.

As shown in \Cref{Y3_ASS_ss2}, the groups we have to consider are the potential copies of $\mathbb{F}_2$ at $(2k,k - 2)$ and at $(2k + 1,k - 2)$ for $k \ge 14$. Figures \ref{Y2_ASS_ss2}, \ref{Y3_ASS_ss2}, and \ref{Y_ASS_ss2} illustrate this in the case $k = 15$.

First, consider the potential nonzero permanent cycle $b \in \pi_{2k,2k + (k - 2)}(C\tau \otimes \widetilde{Y}_3)$. For brevity, let $i \coloneqq 2k$ and $j \coloneqq k - 2$. In Figures \ref{Y2_ASS_ss2}, \ref{Y3_ASS_ss2}, and \ref{Y_ASS_ss2}, $i = 30$ and $j = 13$.

The class $b$ must be in the image of the composite $$\pi_{i,i + j}(\Sigma^{2,4}C\tau \otimes \widetilde{Y}) \to \pi_{i,i + j}(\Sigma^{1,2}C\tau \otimes \widetilde{Y}_2) \to \pi_{i,i + j}(C\tau \otimes \widetilde{Y}_3),$$ and its preimage is uniquely determined. Since this preimage $$a \in \pi_{i,i + j}(\Sigma^{2,4}C\tau \otimes \widetilde{Y})$$ lies in the vanishing region of $\Sigma^{2,4}\widetilde{Y}$ and is a permanent cycle for degree reasons (see the proof of \Cref{Y2_vanish}), it must lift to a $\tau$-torsion class $\hat{a} \in \pi_{i,i + j}(\Sigma^{2,4}\widetilde{Y})$. Then the image of $\hat{a}$ in $\pi_{i,i + j}\widetilde{Y}_3$ is a $\tau$-torsion lift of $b$. Thus, in the modified Adams spectral sequence picture, $b$ is killed by a $d_2$.

The other potential nonzero permanent cycle in the range $$\frac{1}{2}(t - s) - 2.5 \le s < \frac{1}{2}(t - s) - 1.5$$ lies in $\pi_{2k + 1,(2k + 1) + (k - 2)}(C\tau \otimes \widetilde{Y}_3)$. If such a nonzero class $\beta \in \pi_{2k + 1,(2k + 1) + (k - 2)}(C\tau \otimes \widetilde{Y}_3)$ exists, it has a unique preimage $\alpha \in \pi_{2k + 1,(2k + 1) + (k - 2)}(\Sigma^{1,2}C\tau \otimes \widetilde{Y}_2)$.

For brevity, let $u \coloneqq 2k + 1$ and $v \coloneqq k - 2$. Then in Figures \ref{Y2_ASS_ss2}, \ref{Y3_ASS_ss2}, and \ref{Y_ASS_ss2}, $u = 31$ and $v = 13$.

We claim that $\alpha$ is a permanent cycle. We first note that $\Sigma^{1,2}\widetilde{Y}_2$ is $\tau$-complete. This is true because $\widetilde{Y} \simeq \nu(C(2) \otimes C(\eta))$ is $\tau$-complete (see \cite[Proposition A.13]{burklund2019boundaries}) and cofibers of $\tau$-complete synthetic spectra are $\tau$-complete (\Cref{lim_tau_comp}). It may help to think about this $\tau$-completeness as saying that the modified Adams spectral sequence converges to $\pi_*(C(2) \otimes C(\eta^2))$.

By the $\tau$-completeness of $\Sigma^{1,2}\widetilde{Y}_2$, in order to show that $\alpha$ is a permanent cycle, it is enough to show that it admits compatible lifts along the maps $$\pi_{u,u + v}(\Sigma^{1,2}C\tau^{i + 1} \otimes \widetilde{Y}_2) \to \pi_{u,u + v}(\Sigma^{1,2}C\tau^i \otimes \widetilde{Y}_2)$$ for $i \ge 1$. For this, it suffices to show that these maps are surjective. The $i$th map is induced by the second map in the cofiber sequence $$\Sigma^{1,2 - i}C\tau \otimes \widetilde{Y}_2 \to \Sigma^{1,2}C\tau^{i + 1} \otimes \widetilde{Y}_2 \to \Sigma^{1,2}C\tau^i \otimes \widetilde{Y}_2$$ obtained by smashing the cofiber sequence of \Cref{tau_cofib} with $\Sigma^{1,2}\widetilde{Y}_2$. By exactness, the map $$\pi_{u,u + v}(\Sigma^{1,2}C\tau^{i + 1} \otimes \widetilde{Y}_2) \to \pi_{u,u + v}(\Sigma^{1,2}C\tau^i \otimes \widetilde{Y}_2)$$ is surjective when the $\tau$-Bockstein vanishes: $$\pi_{u,u + v}(\Sigma^{1,2}C\tau^i \otimes \widetilde{Y}_2) \to \pi_{u,u + v}(\Sigma^{2,2 - i}C\tau \otimes \widetilde{Y}_2).$$

As we can see from \Cref{Y2_ASS_ss2}, this map automatically vanishes for $i \ne 2$ because its target is 0. Thus, it is enough to show that the following map ($i = 2$) vanishes: $$\pi_{u,u + v}(\Sigma^{1,2}C\tau^2 \otimes \widetilde{Y}_2) \to \pi_{u,u + v}(\Sigma^{2,0}C\tau \otimes \widetilde{Y}_2).$$ In the modified Adams spectral sequence picture, this means that the $d_3$ from $\alpha$ vanishes. Suppose this $d_3$ is nonzero. Then the class $\gamma \in \pi_{u - 1,u + v + 2}(C\tau \otimes \widetilde{Y}_2)$ killed by this map lifts to a $\tau^2$-torsion class of $\pi_{u - 1,u + v + 2}(\Sigma^{1,2}\widetilde{Y}_2)$, which goes to 0 in the homotopy of $\tau^{-1}\Sigma^{1,2}\widetilde{Y}_2$.

However, we claim that this creates a contradiction. Since $\Sigma^{1,2}\widetilde{Y}_2$ admits a $v_1$-banded vanishing line by \Cref{Y2_vanish}, the map $$F^{k - 1}\pi_{2k}(\tau^{-1}\Sigma^{1,2}\widetilde{Y}_2) \to \pi_{2k}L_{K(1)}(\tau^{-1}\Sigma^{1,2}\widetilde{Y}_2) \cong \pi_{2k}(\Sigma L_{K(1)}(C(2) \otimes C(\eta^2)))$$ is an isomorphism. By the $\tau$-completeness of $\Sigma^{1,2}\widetilde{Y}_2$, the fact that $\gamma$ is killed by the $\tau$-Bockstein implies that there is at most one surviving class contributing to $F^{k - 1}\pi_{2k}(\tau^{-1}\Sigma^{1,2}\widetilde{Y}_2)$ (the one in $(2k,k - 1)$ in \Cref{Y2_ASS_ss2}). Hence, $$|\pi_{2k}L_{K(1)}(\Sigma C(2) \otimes C(\eta^2))| \le 2.$$

Moreover, we know from \Cref{hom_moore} that \begin{align*}
|\pi_{2k}L_{K(1)}(C(2) \otimes C(\eta^3))| &= |\pi_{2k}L_{K(1)}(C(2) \oplus \Sigma^4C(2))| \\
&= |\pi_{2k}L_{K(1)}(C(2))||\pi_{2k - 4}L_{K(1)}(C(2))| \\
&= 8
\end{align*}
From the computation used to prove \Cref{Y_vanish}, we know that $$|\pi_{2k}L_{K(1)}(C(2) \otimes C(\eta))| = 2.$$ Since $2 \cdot 2 < 8$, this contradicts the exactness of the sequence we get by applying the exact functor $L_{K(1)} \circ \tau^{-1}$ to the cofiber sequence (\ref{cofib_Y3_1}): $$\pi_{2k}L_{K(1)}(\Sigma C(2) \otimes C(\eta^2)) \to \pi_{2k}L_{K(1)}(C(2) \otimes C(\eta^3)) \to \pi_{2k}L_{K(1)}(C(2) \otimes C(\eta)).$$

Therefore, the $d_3$ vanishes, and $\alpha$ is a permanent cycle.

Since $\alpha$ is in the vanishing region of $\Sigma^{1,2}\widetilde{Y}_2$ given by \Cref{Y2_vanish}, we can conclude that $\alpha$ lifts to $\tau^2$-torsion as we did with $a$. Since $\alpha$ maps to $\beta$, $\beta$ must also lift to $\tau$-torsion.

We finally have that $\widetilde{Y}_3$ admits a $v_1$-banded vanishing line with parameters $(-1.5 \le 1,29,0.2,6.2,3)$. Since $\widetilde{Y}_3 \simeq C(\widetilde{2}) \oplus \Sigma^{4,6}C(\widetilde{2})$, $C(\widetilde{2})$ admits a $v_1$-banded vanishing line with parameters $(-1.5 \le 1,25,0.2,5,3)$. Since $C(\widetilde{2}) \simeq \nu C(2)$ (see \cite[Lemma 15.4]{burklund2019boundaries}), this encodes a $v_1$-banded vanishing line in the Adams spectral sequence for $C(2)$ and proves the theorem.
\end{proof}

\subsection{The homotopy groups of the $K(1)$-local Moore spectrum}
Our proof of \Cref{mahowald_thm_better} shows that there is a $v_1$-banded vanishing line for $C(\tilde{2})$, where the bottom edge is the line $s = \frac{1}{2}(t - s) - 1.5$. By definition, this means that the following composite is an isomorphism for all sufficiently high $i$: $$F^{\frac{1}{2}i - 1.5}\pi_iC(2) \xhookrightarrow{} \pi_iC(2) \to \pi_iL_{K(1)}C(2).$$ Since we know what the Adams spectral sequence for $C(2)$ looks like (\Cref{C2_ASS}), we can read off the homotopy groups of $L_{K(1)}C(2)$.

As with the homotopy of the $K(1)$-local sphere, the following result is well-known to experts. However, we include the following proof because a proof is not readily available in the literature and because this particular proof follows immediately from \Cref{mahowald_thm_better}.

\begin{theorem}
The homotopy groups of $L_{K(1)}C(2)$ are the following:
\begin{center}
\begin{tabular}{|c|c|}
\hline
$i \pmod{8}$ & $\pi_i(L_{K(1)}C(2))$ \\
\hline
0 & $\mathbb{Z}/2 \oplus \mathbb{Z}/2$ \\
\hline
1 & $\mathbb{Z}/2 \oplus \mathbb{Z}/4$ \\
\hline
2 & $\mathbb{Z}/2 \oplus \mathbb{Z}/4$ \\
\hline
3 & $\mathbb{Z}/2 \oplus \mathbb{Z}/2$ \\
\hline
4 & $\mathbb{Z}/2$ \\
\hline
5 & 0 \\
\hline
6 & 0 \\
\hline
7 & $\mathbb{Z}/2$ \\
\hline
\end{tabular}
\end{center}
\end{theorem}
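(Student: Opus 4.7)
The plan is to combine the vanishing-line isomorphism supplied by \Cref{mahowald_thm_better} with the classical description of the $v_1$-periodic $E_\infty$-page for $C(2)$. From condition (3) of the definition of a $v_1$-banded vanishing line, \Cref{mahowald_thm_better} directly yields that the composite
\[
F^{\frac{i}{2}-1.5}\pi_iC(2) \hookrightarrow \pi_iC(2) \to \pi_iL_{K(1)}C(2)
\]
is an isomorphism for all $i \ge 25$. Since both sides are $8$-periodic---the left-hand side by the $v_1^4$-periodicity of the Adams $E_2$-page above the line of slope $\tfrac{1}{5}$, and the right-hand side by the $v_1^4$-self map of $L_{K(1)}C(2)$---it suffices to fix one sufficiently large representative $i \ge 25$ in each residue class modulo $8$ and compute $F^{\frac{i}{2}-1.5}\pi_iC(2)$ there.

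For each such $i$ I would enumerate the $E_\infty$-generators of the ASS for $C(2)$ lying above the line $s = \tfrac{1}{2}(t-s) - 1.5$. By Adams's description \cite{adams1966periodicity}, these generators form the familiar lightning-flash pattern shown schematically in \Cref{C2_ASS}, essentially two copies of the sphere's flash interlocked via the two-cell structure of $C(2)$. Counting $\mathbb{F}_2$-generators in each residue class recovers exactly the $2$-adic valuations recorded in \Cref{hom_moore}, which in particular confirms the claimed orders.

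What remains is to determine the group extensions, which amounts to reading off the $h_0$-action on the relevant portion of the $E_\infty$-page (since $h_0$-multiplications correspond to genuine $2$-multiplications in $\pi_*C(2)$, with no contribution from filtrations below the band because those are killed by the map to $L_{K(1)}$). For $i \equiv 4, 5, 6, 7 \pmod 8$ there is at most one generator in the band, so no ambiguity arises. For $i \equiv 0$ and $i \equiv 3 \pmod 8$ the two surviving generators sit at opposite ``peaks'' of adjacent flashes and no $h_0$-multiplication links them, so they assemble into $\mathbb{Z}/2 \oplus \mathbb{Z}/2$. For $i \equiv 1$ and $i \equiv 2 \pmod 8$, a single $h_0$-tower of height two joins a pair of the three surviving generators, producing a $\mathbb{Z}/4$ summand alongside an isolated $\mathbb{Z}/2$, i.e.\ $\mathbb{Z}/2 \oplus \mathbb{Z}/4$. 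The one point to be careful about is the possibility of a hidden $2$-extension not visible in the $E_\infty$-page; but since the total order of each group has already been pinned down by \Cref{hom_moore} and the observed $h_0$-structure already realizes that order, no hidden $2$-extension is consistent with the data, so the group structure listed in the theorem is forced.
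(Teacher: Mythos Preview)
Your overall strategy matches the paper's: use condition (3) of the $v_1$-banded vanishing line from \Cref{mahowald_thm_better} to identify $\pi_iL_{K(1)}C(2)$ with a band in the Adams $E_\infty$-page, invoke $v_1^4$-periodicity, and read the answer off the lightning-flash chart. The orders from \Cref{hom_moore} and the visible $h_0$-structure are also used in the same way.

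The gap is in your final paragraph. Your sentence ``since the total order of each group has already been pinned down by \Cref{hom_moore} and the observed $h_0$-structure already realizes that order, no hidden $2$-extension is consistent with the data'' is a non-argument: the number of $\mathbb{F}_2$-generators on the $E_\infty$-page \emph{always} realizes the order of the homotopy group, with or without hidden extensions, because $E_\infty$ is the associated graded. For $i\equiv 0$ or $3\pmod 8$ (two classes, order $4$) both $\mathbb{Z}/2\oplus\mathbb{Z}/2$ and $\mathbb{Z}/4$ are consistent with the order; for $i\equiv 1$ or $2\pmod 8$ (three classes, order $8$, one visible $h_0$) both $\mathbb{Z}/2\oplus\mathbb{Z}/4$ and $\mathbb{Z}/8$ are consistent. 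So the order constraint from \Cref{hom_moore} does no work in excluding hidden extensions.

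The paper resolves these cases by different, specific arguments. For $i\equiv 0$ it does not use the chart at all but computes $\pi_0L_{K(1)}C(2)\cong\mathbb{Z}/2\oplus\mathbb{Z}/2$ directly from the long exact sequence of $L_{K(1)}\mathbb{S}\xrightarrow{2}L_{K(1)}\mathbb{S}\to L_{K(1)}C(2)$ at $i=0$ (where $\pi_{-1}L_{K(1)}\mathbb{S}\cong\mathbb{Z}_2$ has trivial $2$-torsion, so no extension problem arises), and then invokes $8$-periodicity. For $i\equiv 3$ it first argues that in degree $8k+2$ the bottom two classes cannot be linked by a hidden $2$-extension (since that would conflict with the visible $h_0$ above), and then uses the $\eta$-multiplication from column $8k+2$ to column $8k+3$ to transport this conclusion and rule out a hidden extension in degree $8k+3$. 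You will need arguments of this kind, not the order count, to finish the proof.
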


\begin{proof}
We first recall that $C(2)$ has a $v_1^4$ self-map (see \cite{adams1966groups}), so that the self-map $$L_{K(1)}v_1^4: \Sigma^8L_{K(1)}C(2) \to L_{K(1)}C(2)$$ is an equivalence. In particular, this map induces an isomorphism $$\pi_{* - 8}L_{K(1)}C(2) \cong \pi_*L_{K(1)}C(2),$$ so the homotopy groups of $L_{K(1)}C(2)$ have period 8.

We already know $\pi_iL_{K(1)}C(2)$ for $i \equiv 4,5,6,7 \pmod{8}$ since the orders of these groups uniquely determine the groups. We know $\pi_0L_{K(1)}C(2) \cong \mathbb{Z}/2 \oplus \mathbb{Z}/2$ from the proof of \Cref{hom_moore}, so our observation about $v_1^4$-periodicity implies that $\pi_iL_{K(1)}C(2) \cong \mathbb{Z}/2 \oplus \mathbb{Z}/2$ for all $i \equiv 0 \pmod{8}$.

We only have the following extension problems left to solve:
$$0 \to \mathbb{Z}/2 \oplus \mathbb{Z}/2 \to \pi_{8k + 1}L_{K(1)}C(2) \to \mathbb{Z}/2 \to 0$$
$$0 \to \mathbb{Z}/2 \to \pi_{8k + 2}L_{K(1)}C(2) \to \mathbb{Z}/2 \oplus \mathbb{Z}/2 \to 0$$
$$0 \to \mathbb{Z}/2 \to \pi_{8k + 3}L_{K(1)}C(2) \to \mathbb{Z}/2 \to 0$$
We address them in order:
\begin{itemize}
\item $8k + 1$: Pick $k$ large enough that we have an isomorphism $$F^{\frac{1}{2}(8k + 1) - 1.5}\pi_{8k + 1}C(2) \xhookrightarrow{} \pi_{8k + 1}C(2) \to \pi_{8k + 1}L_{K(1)}C(2).$$ Then as we can see from \Cref{C2_ASS}, there is a 2-extension in $F^{\frac{1}{2}(8k + 1) - 1.5}\pi_{8k + 1}C(2)$. Thus, $\pi_{8k + 1}L_{K(1)}C(2) \cong \mathbb{Z}/2 \oplus \mathbb{Z}/4$. By $v_1^4$-periodicity, this determines $\pi_{8k + 1}L_{K(1)}C(2)$ for all $k$.

\item $8k + 2$: This proceeds identically to the case $8k + 1$. We use the 2-extension in \Cref{C2_ASS} to deduce that $\pi_{8k + 2}L_{K(1)}C(2) \cong \mathbb{Z}/2 \oplus \mathbb{Z}/4$.

\item $8k + 3$: Again, pick $k$ large. Then looking at \Cref{C2_ASS}, we want to rule out the possibility of a hidden 2-extension in $F^{\frac{1}{2}(8k + 3) - 1.5}\pi_{8k + 3}C(2)$. Our analysis in the case $8k + 2$ shows that there is no hidden 2-extension between the bottom two classes in $F^{\frac{1}{2}(8k + 2) - 1.5}\pi_{8k + 2}C(2)$ because this would contradict the 2-extension above it. The $\eta$-multiplication from column $8k + 2$ to column $8k + 3$ then rules out the possibility that a 2-extension occurs in $F^{\frac{1}{2}(8k + 3) - 1.5}\pi_{8k + 3}C(2)$. We conclude that $\pi_{8k + 3}L_{K(1)}C(2) \cong \mathbb{Z}/2 \oplus \mathbb{Z}/2$.
\end{itemize}
\end{proof}

\bibliographystyle{plain}
\nocite{*}
\bibliography{Bibliography}

\end{document}